\newtheorem{thm}{Theorem}[section]
\newtheorem{cor}[thm]{Corollary}
\newtheorem{lem}[thm]{Lemma}
\newtheorem{prop}[thm]{Proposition}
\newtheorem{exam}[thm]{Example}
\theoremstyle{definition}
\newtheorem{defi}[thm]{Definition}
\theoremstyle{remark}
\newtheorem{nota}[thm]{Notation}
\newtheorem{rem}[thm]{Remark}
\providecommand{\keywords}[1]
{

	\small	
	\textbf{\textit{Keywords:}} #1
}
\providecommand{\AMSMSC}[1]
{

	\small	
	\textbf{\textit{AMS MSC:}} #1
}
\begin{document}

\title{The Global Fibered Representation Ring}

\author[1]{J. Miguel Calderón \orcidlink{0009-0007-4685-8837}}
\author[2]{Alberto G. Raggi-Cárdenas \orcidlink{0000-0003-1720-1733}}

\affil[1]{Centro de Investigación en Matemáticas, A.C., Unidad Mérida,\\
Parque Científico y Tecnológico de Yucatán,\\
Carretera Sierra Papacal--Chuburná Puerto Km 5.5,\\
Sierra Papacal, Mérida, Yucatán 97302, México.\\
\texttt{calderonl@cimat.mx}
}

\affil[2]{Centro de Ciencias Matemáticas,\\
Universidad Nacional Autónoma de México,\\
Morelia, Michoacán 58089, México.\\
\texttt{agraggi@gmail.com}
}

\maketitle
\thispagestyle{plain} 

\begin{center}
\end{center}

\begin{center}
\end{center}
\begin{abstract}
In this paper, we combine the notions of the fibered Burnside ring and 
the character ring, viewing both as fibered biset functors, to define 
what we call the global fibered representation ring of a finite group. 
We compute all ring homomorphisms from this ring to the complex numbers, 
describe its spectrum and its connected components, and determine the 
primitive idempotents of the ring after tensoring with $\mathbb{Q}$, 
together with their conductors.

\end{abstract}
\keywords{biset, fibered biset functors,   Character ring, fibered Burnside ring }
\AMSMSC{16Y99, 18D99, 20J15.}

\section{Introduction}
Let $A$ be an abelian group and let $G$ be a finite group. An $A$-fibered $G$-set is a left $A \times G$-set $X$ that is free as an $A$-set and has finitely many $A$-orbits. A morphism between two $A$-fibered $G$-sets is an $A \times G$-equivariant map. The $A$-fibered $G$-sets together with their morphisms form a category, which we denote by $_Gset^A$.

The $A$-fibered Burnside ring of $G$, denoted by $B^A(G)$, is defined as the Grothendieck group of $_Gset^A$, taken with respect to disjoint union (see Section 1.7 of \cite{fibered}). The category $_Gset^A$ admits coproducts and is symmetric monoidal with respect to a tensor product $\otimes_A$ (see Section 2.1 of \cite{fibered}), and these structures induce the ring structure on $B^A(G)$. The $A$-fibered Burnside ring was introduced in greater generality by Dress \cite{dressRing}. 

For any commutative ring $R$ with unity, we set 
$B^A_R(G) := R \otimes_{\mathbb{Z}} B^A(G)$.  
Similarly, one defines $A$-fibered $(G,H)$-bisets and the Grothendieck group 
$B^A(G,H) := B^A(G \times H)$.  
Fibered bisets admit a tensor product construction (see Section 2.1 of \cite{fibered}), giving rise to the $A$-fibered biset category over $R$, denoted by $R\mathcal{C}^A$. Its objects are finite groups, and for two objects $G$ and $H$, the morphisms are 
$B^A_R(G,H) := R \otimes_{\mathbb{Z}} B^A(G,H)$,  
with composition given by the tensor product of $A$-fibered bisets.

The $R$-linear covariant functors from $R\mathcal{C}^A$ to $_R\mathbf{Mod}$, together with natural transformations, form the abelian category $\mathcal{F}^A_R$ of $A$-fibered biset functors over $R$ (see \cite{fibered}).

The $A$-fibered Burnside ring was first introduced in greater generality by Dress in~\cite{dressRing}.  For any commutative ring  $R$, we set   $B^A_R(G):= R \otimes_{\mathbb{Z}} B^A(G)$. We write  $\mathcal{M}^A(G) = \{(H, \phi) \mid H \leq G, \phi \in \operatorname{Hom}(H, A)\}$ on which \(G\) acts by conjugation, and denote by \([\mathcal{M}^A(G)]\) a set of representatives of the $G$-conjugacy classes in \(\mathcal{M}^A(G)\). The elements of \([\mathcal{M}^A(G)]\) parametrize a \(\mathbb{Z}\)-basis of \(B^A(G)\).
Similarly, one defines $A$-fibered $(G,H)$-bisets and their Grothendieck groups $B^A(G,H)$, defined as $B^A(G \times H)$. 
Moreover, fibered bisets admit a tensor product construction giving rise to the $A$-fibered biset category over a commutative ring $R$, denoted by $R\mathcal{C}^A$. 
Its objects are again finite groups, and its morphism sets are given by $B^A_R(G, H):= R \otimes B^A(G, H)$. The $R$-linear  covariant  functors from $R\mathcal{C}^A$ to $_R\mathbf{Mod}$, together with natural transformations, form the abelian category $\mathcal{F}^A_R$ of $A$-fibered biset functors over $R$ (see~\cite{fibered}).
We write $\mathcal{M}^A(G) = \{(H,\phi) \mid H \leq G,\ \phi \in \operatorname{Hom}(H,A)\}$,
on which $G$ acts by conjugation. We denote by $N_G(H,\phi)$ the stabilizer of $(H,\phi)$, and  we denote by $[\mathcal{M}^A(G)]$ a set of representatives of the $G$-conjugacy classes. 
\begin{nota}
Let $G$ be a finite group. For any $(H,\phi) \in \mathcal{M}^A(G)$, we set 
\begin{align*}
\left[\frac{G\times A}{\{ (h, \phi(h)) \mid h\in H\}}\right] := [H,\phi]_G.
\end{align*}
Let  $X$ be   an $A$-fibered $G$-set and $x \in X$, we denote by $G_x \leq G$ the stabilizer of the $A$-orbit of $x$, and by $\phi_x: G_x \longrightarrow A$ the map defined by the equation $gx = \phi_x(g)x$ for any $g \in G_x$.  
We have,   the $G \times A$-orbit of $x$ lies in the class $[G_x, \phi_x]_G$, and the set 
$\{[H, \phi]_G \mid (H, \phi) \in [\mathcal{M}^A(G)] \}$ is a $\mathbb{Z}$-basis of $B^A(G)$.
\end{nota}

One example of these functors is the Character ring
\begin{exam}
    Let $A = \mathbb{C}^\times$. For a finite group $G$, we denote by $R_{\mathbb{C}}(G)$  the character ring of $\mathbb{C}[G]$-modules. In \cite{fibered}, Boltje and  Coşkun give the character ring an $A$-fibered biset functor structure,  $R_{\mathbb{C}}^{\mathbb{C}^\times}$,  in the following way. The assignment  $G \mapsto R_\mathbb{C}(G)$ gives rise to a $\mathbb{C}^\times$-fibered biset
functor $R_{\mathbb{C}^\times}$ by mapping the standard basis element 
$\left[U, \phi\right]_{G\times H}$ 
of $B^{\mathbb{C}^\times}(G, H)$ to the map:
$
R_\mathbb{C}(H) \to R_\mathbb{C}(G), \quad [M] \mapsto \left[\mathrm{Ind}^{G \times H}_{U}(\mathbb{C}^\phi) \otimes_{\mathbb{C}H} M \right], 
$
where $\mathbb{C}^\phi$ denotes the one-dimensional $\mathbb{C}U$-module associated with the homomorphism $\phi:U \longrightarrow \mathbb{C}^\times$.
\end{exam}
In Section \ref{Def global} we will define the global fibered representation ring, which is inspired by the global representation ring introduced in \cite{raggiglobal}. This construction is analogous to previous ones, such as Witherspoon’s ring of $G$-vector bundles \cite{sara} and Nakaoka’s ring \cite{nakaoka2008structure}. In Section \ref{marks} we describe the morphisms from the global fibered   representation ring to $\mathbb{C}$, which will play a key role in Section \ref{prime ideals} for characterizing the prime ideals of the global fibered representation ring.
On the other hand, in Section \ref{idempotents sec} we compute the idempotents of the global fibered  representation ring tensored with $\mathbb{Q}$, and in Section \ref{conect component} we determine the connected components associated to these idempotents within the global fibered representation ring. This analysis allows us to provide applications to other areas of representation theory, including an equivalence related to the Feit–Thompson theorem. Finally, in Section \ref{fubtor seccion} we endow the global fibered  representation ring with the structure of a functor of fibered bisets.

\section{The global fibered representation ring } \label{Def global}
Now, we define the global fibered representation ring.  It is important to recall the global
representation ring is defined in  \cite{raggiglobal}, and this ring has an $A$-fibered biset functor structure (see  \cite{karleyTesis} or \cite{calderonlower}).   The next functor is inspired by the global representation ring functor.  By analogy, it plays a role similar to that between the  Burnside functor and the  Burnside fibered functor. 
Throughout this paper, we denote by $A$ an abelian group.
\begin{defi}
Given  a group $G$ and  a $G$-set $X$, a $\mathbb{C}G$-module $V$ is said to be $X$-graded if
\begin{align*}
    V=\bigoplus_{x\in X} V_x,
\end{align*}
where each $V_x$ is a $\mathbb{C}$-subspace such that  such that $g V_x= V_{g x}$ for all $g\in G$ and $x\in X$.
\end{defi}
\begin{defi}
Let $G$ be a finite group. We denote by $\aleph_G$ the category whose objects are pairs $(X, V)$, where $X$ is an $A$-fibered $G$-set and $V$ is an $X/A$-graded $\mathbb{C}G$-module, with $X/A$ denoting the set of all $A$-orbits of $X$. 

The morphisms in $\aleph_G$ from $(X, V)$ to $(Y, W)$ are pairs $(\alpha, f)$, where $\alpha: X \longrightarrow Y$ is a morphism of $A$-fibered $G$-sets, $f: V \longrightarrow W$ is a morphism of $\mathbb{C}G$-modules, and $f(V_x) \subseteq W_{\alpha(x)}$ for all $x \in X$. The composition in $\aleph_G$ is defined naturally, and the identity morphism of $(X, V)$ is $(\mathrm{Id}_X, \mathrm{Id}_V)$, where $\mathrm{Id}_X$ is the identity function on the $A$-fibered $G$-set $X$, and $\mathrm{Id}_V$ is the identity map of the $\mathbb{C}G$-module $V$.We say that the element $(X, V)$ is isomorphic to $(Y, W)$ if there exists a morphism $(\alpha, f)$ such that $\alpha$ is a bijection and $f$ is an isomorphism of $\mathbb{C}G$-modules. We denote by $[X, V]$ the isomorphism class of $(X, V)$ in $\aleph_G$.
\end{defi}
The category  $\aleph_G$   has a coproduct defined by  $$(X, V)\oplus (Y, W)=(X\sqcup Y, V\oplus W),$$ where $\sqcup$ is the disjoint union of the sets and  $\oplus$ is the direct sum of modules. 
We defined  $T^A(G):=G_0(\aleph_G, \sqcup)$  the Grothendieck group.
\begin{defi}
We define the global $A$-fibered  representation ring of the group $G$ as the quotient
\begin{align*}
\text{Д} ^A (G) = \dfrac{T^A(G)}{ \left\langle [X,V\oplus W]-[X,V]-[X,W]\right\rangle_\mathbb{Z} }.
\end{align*}
We also write $[X, V ]_G$ to denote elements of $\text{Д}^A(G)$.  This group has a ring structure, where the product is defined by  $[X, V]_G\cdot[Y,W]_G:=[X\otimes_A Y, V\otimes W]_G$. Moreover, by definition of addition in $\text{Д}^A(G)$, the set $\{ [X, S]\in \text{Д}^A(G) \mid X \text{ is transitive and } S \text{ simple } \mathbb{C}G-\text{module}  \}$ forms a $\mathbb{Z}$-basis of $\text{Д}^A(G)$. 
\end{defi}

\begin{defi}
    Let $H$ be a subgroup of $G$, $\phi \in Hom(H,A)$, and let  $W$ be  a $\mathbb{C}H$-module. We denote  by   $[H,\phi, W ]_G$  the isomorphism class of the object $ ([G/H, \phi]_G, i^G_H (W ))$, where  $i_H^G(W ):=\mathbb{C}G \otimes_{\mathbb{C}H} W$ is the induced module equipped with the canonical $G/H$-grading.  
Note that $[H, \phi, W]_G = [K, \psi, U]_G$ if and only if there exists  $g \in G$ such that 
$
{}^g H = K, \quad {}^g \phi = \psi, \quad \text{and} \quad {}^g W \cong U 
$ 
as $\mathbb{C}H$-modules. These elements are the generating elements of $\text{Д}^A(G)$.
\end{defi}
The next lemma describes how the product of two generating elements in $\text{Д}^A(G)$ is computed
\begin{lem}[Mackey]
   Let $ [H,\phi,V]_G$ and $ [K, \psi, W]_G$ be elements of  $\text{Д}^A(G)$, We have the following product formula:

    \begin{align*}
        [H,\phi,V]_G \cdot [K, \psi, W]_G= \sum_{x\in [H\backslash G/K]} [H\cap {^xK}, \phi\cdot {^x \psi}, res^H_{H\cap {^xK}} V \otimes_{\mathbb{C}(H\cap {^xK})} res^{^xK}_{H\cap {^xK}} {^x W} ]_G
    \end{align*}  
\end{lem}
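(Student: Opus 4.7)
The plan is to reduce the formula to the combination of two standard Mackey-type results: the Mackey formula in the $A$-fibered Burnside ring $B^A(G)$, and the classical Mackey formula for the tensor product of two induced $\mathbb{C}G$-modules, and then to match them coset representative by coset representative.

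First, I would unpack the product using the definition of multiplication in $\text{Д}^A(G)$ together with the generating-element description: the product is
\begin{align*}
[H,\phi,V]_G \cdot [K,\psi,W]_G
= \bigl[\, [G/H,\phi]_G \otimes_A [G/K,\psi]_G,\; i^G_H(V) \otimes_\mathbb{C} i^G_K(W)\,\bigr]_G,
\end{align*}
where $i^G_H(V) \otimes_\mathbb{C} i^G_K(W)$ carries the diagonal $\mathbb{C}G$-action together with the tensor of the two canonical gradings, indexed by $G/H \times G/K$.

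Next, I would apply the Mackey formula in $B^A(G)$ to decompose the underlying fibered $G$-set as
\begin{align*}
[G/H,\phi]_G \otimes_A [G/K,\psi]_G \;\cong\; \bigsqcup_{x \in [H\backslash G/K]} \bigl[G/(H\cap {}^xK),\; \phi \cdot {}^x\psi\bigr]_G,
\end{align*}
which at the level of $A$-orbits is the standard double coset decomposition $G/H \times G/K \cong \bigsqcup_x G/(H\cap {}^xK)$, the orbit associated with $x$ containing the point $(H,\,xK)$ whose stabilizer is $H \cap {}^xK$.

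Using this identification, the $G/H \times G/K$-graded module $i^G_H(V) \otimes_\mathbb{C} i^G_K(W)$ splits as a direct sum of $\mathbb{C}G$-submodules, one for each double coset representative $x$. The graded component at $(H,\,xK)$ equals $V \otimes (x \otimes W)$, which as an $H \cap {}^xK$-module is isomorphic to $\mathrm{res}^H_{H \cap {}^xK} V \otimes_{\mathbb{C}(H \cap {}^xK)} \mathrm{res}^{{}^xK}_{H \cap {}^xK} {}^xW$. Spreading this component around the full orbit of $(H, xK)$ under $G$ yields, by the standard identification of induced modules with direct sums over coset representatives, exactly the induced $\mathbb{C}G$-module $i^G_{H \cap {}^xK}\bigl(\mathrm{res}^H_{H \cap {}^xK} V \otimes_{\mathbb{C}(H\cap {}^xK)} \mathrm{res}^{{}^xK}_{H\cap {}^xK} {}^xW\bigr)$.

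The main obstacle I foresee is a bookkeeping one: verifying simultaneously, for each $x$, that the $A$-fibered structure coming from the fibered Mackey formula (in particular the character $\phi \cdot {}^x\psi$ on $H \cap {}^xK$) and the $\mathbb{C}G$-module decomposition coming from the classical tensor Mackey formula are compatible with the grading, so that each summand on the right is literally of the form $[H\cap {}^xK,\,\phi\cdot{}^x\psi,\,\text{-}]_G$. Once this compatibility is checked using the standard double coset argument, summing the identified pieces over $x \in [H \backslash G/K]$ gives the stated identity in $\text{Д}^A(G)$.
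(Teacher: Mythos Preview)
Your proposal is correct and follows essentially the same route as the paper: unpack the product as $[[H,\phi]_G\otimes_A[K,\psi]_G,\; i^G_H(V)\otimes i^G_K(W)]_G$ and then decompose via the Mackey formula for $A$-fibered $G$-sets (the paper cites Corollary~2.5 of \cite{fibered}) together with the classical Mackey decomposition of the induced tensor product. Your version is more explicit about the compatibility of the grading with the module decomposition, which the paper leaves implicit in its one-line citation, but the underlying argument is the same.
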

\begin{proof}
\begin{align*}
    [H,\phi,V]_G \cdot [K, \psi, W]_G&=[[H,\phi]_G,i^G_H(V)]_G \cdot [[K, \psi]_G, i^G_K(W)]_G\\
    &=[[H,\phi]_G\otimes_A [K, \psi]_G,i^G_H(V) \otimes i^G_K(W)]_G\\
    &= \sum_{x\in [H\backslash G/K]} [H\cap {^xK}, \phi\cdot {^x \psi}, res^H_{H\cap {^xK}} V \otimes_{\mathbb{C}(H\cap {^xK})} res^{^xK}_{H\cap {^xK}} {^x W} ]_G.
\end{align*}
The last equality follows from Mackey's formula for $A$-fibered bisets 
(see Corollary 2.5 of \cite{fibered}). 
\end{proof}
Let $G$ be a finite group and let $[H,\phi,S]_G \in \text{Д}^A(G)$, where $S$ is a simple $\mathbb{C}G$-module.  
Recall that the underlying $A$-fibered $G$-set of $[H,\phi]_G$ has $A$-orbits indexed by $G/H$. Thus, $S$ decomposes as  $S=\bigoplus_{x\in [G/ H ]} S_x $,  where each $S_x$ is a $\mathbb{C}H$-module. Since $S$ is simple as a $\mathbb{C}G$-module, it follows that all $S_x$ are simple $\mathbb{C}H$-modules. Hence,
$
S \;\cong\; \mathbb{C}G \otimes_{\mathbb{C}H} S_y,
\quad \text{for any } y \in G/H.$  Moreover, the set
$$
\Big\{\, [H,\phi,S]_G \in \text{Д}^A(G)
\;\Big|\;
(H,\phi) \in [\mathcal{M}^A(G)]
\text{ and } S \text{ is a simple } \mathbb{C}H\text{-module}
\Big\}
$$
forms a $\mathbb{Z}$-basis of $\text{Д}^A(G)$. The group $G$ acts on this basis by conjugation,\- 
${}^g[H,\phi,S]_G := [\,{}^gH,\;{}^g\phi,\;{}^gS\,]_G,$
where ${}^g\phi({}^gh) := \phi(h)$, and ${}^gS$ denotes the $\mathbb{C}({}^gH)$-module obtained by transporting the action via conjugation.
A natural question arises about when the ring has a finite basis.\\
Note that the number of simple $\mathbb{C}G$-modules is finite for any finite group $G$. Moreover, the cardinality of the basis of $B^A(G)$ is given by
$
\sum_{H \in [S_G]} \bigl|\overline{\operatorname{Hom}(H,A)}\bigr|,
$
where $\overline{\operatorname{Hom}(H,A)}$ denotes the set of orbits of $\operatorname{Hom}(H,A)$ under the action of $N_G(H)$ (see Remark 1.10 of \cite{romero_essentialAlgebra}).  

Thus, the basis of $\text{Д}^A(G)$ is finite if and only if 
$\sum_{H \in [S_G]} \bigl|\overline{\operatorname{Hom}(H,A)}\bigr| < \infty,$
which is equivalent to requiring that 
$ \bigl|\operatorname{Tor}_{\exp(G)}(A)\bigr| < \infty$.

\section{Ring homomorphisms from $\text{Д}^A(G)$ to $\mathbb{C}$} \label{marks}

The objective of this section is to describe all ring homomorphisms from $\text{Д}^A(G)$ to $\mathbb{C}$. To this end, we introduce the following set.

\begin{defi}
Let $G$ be a finite group. We set
$
\mathcal{T}(G) := \{ (H, \psi, a) \mid (H, \psi) \in \mathcal{M}^A(G)\ \text{and}\ a \in H \}.
$
The group $G$ acts on $\mathcal{T}(G)$ by conjugation. We denote by $[\mathcal{T}(G)]$ a set of representatives of its $G$-conjugacy classes.  
Finally, for $(H, \psi, a) \in \mathcal{T}(G)$, we write $N_G(H, \psi, a)$ 
for its stabilizer under this action.
 
\end{defi}

\begin{defi}\label{marca}
 Let $(H,\psi, a) \in \mathcal{T}(G)$. We define the species of  $(H,\psi, a)$ by 
\begin{align*}
  S_{H, \psi, a} :\text{Д}^A(G) &\longrightarrow \mathbb{C}\\
   [X,V]_G &\longrightarrow  \sum_{\substack{ x \in [X/A]\\ (H, \psi) \leq {(G_x,\phi_x)} }}  \chi_{V_x} ( {{} a} ).
    \end{align*} 
where $[X/A]$ denotes the set of $A$-orbits of $X$, and $(G_x,\phi_x)$ is the stabilizer pair of the element $x$ and $\chi_{V_x}$  is the character of $V_x$. Note that
\begin{align*}
    S_{H,\psi, a} ([X,V\oplus W])&=  \sum_{\substack{ x \in [X/A]\\ (H, \psi) \leq {(G_x,\phi_x)} }}  \chi_{(V\oplus W)_x} ( {{} a} )=  \sum_{\substack{ x \in [X/A]\\ (H, \psi) \leq {(G_x,\phi_x)} }}  \chi_{V_x} ( {{} a} )+  \chi_{W_x} ( {{} a} )\\
    &=  \sum_{\substack{ x \in [X/A]\\ (H, \psi) \leq {(G_x,\phi_x)} }}  \chi_{V_x} ( {{} a} )+  \sum_{\substack{ x \in [X/A]\\ (H, \psi) \leq {(G_x,\phi_x)} }}  \chi_{W_x} ( {{} a} )\\
    &= S_{H,\psi, a} ([X,V]) + S_{H,\psi, a} ([X, W]),
\end{align*}
Hence $S_{H,\psi,a}$ is well defined.\\
One can prove directly from the definition that these maps are ring homomorphisms. Indeed, 
$ S_{H,,\psi, a}(X \sqcup Y,\, V \oplus W)= S_{H,,\psi, a}(X, V) + S_{H,,\psi, a}(Y, W),$ and
\begin{align*}
S_{H,,\psi, a}(X \otimes Y,\, V \otimes W)
&= \sum_{(x,y)\in [(X\otimes Y)/A] \atop (H,\psi) \leq (G_{x\otimes y},\phi_{x\otimes y}) } \chi_{ V_{x}}\otimes \chi_{W_{y}}(a)\\
&= \sum_{x\in [X/A] \atop (H,\psi, \leq (G_x,\phi_x) } \sum_{y\in [Y/A] \atop (H,\psi)\leq (G_y,\phi_y) } \chi_{  V_{x}(a)}\cdot \chi_{W_{y}}(a)\\
&= \left(\sum_{x\in [X/A] \atop (H,\psi)\leq (G_x,\phi_x)} \chi_{V_{x}}(a)\right)
   \left(\sum_{y\in [Y/A] \atop (H,\psi)\leq (G_y,\phi_y)} \chi_{W_{y}}(a)\right) \\
&= S_{H,,\psi, a}(X, V)\cdot S_{H,,\psi, a}(Y, W).
\end{align*}
In particular, for a basic element $[K,\phi,S]_G$, we have
     \begin{align*}
   S_{L, \psi, a}( [K, \phi, S]_G)=  \sum_{\substack{ x \in [G/K]\\ (L. \psi) \leq {^x(H,\phi)} }}  \chi_S ( {{} ^xa} ).
    \end{align*}
\end{defi}
Note that, by definition, $S_{L,\psi,a} = S_{L,\psi,{}^g a}$ for any $g \in N_G(K,\psi)$.  
\begin{rem}
Let $(L,\phi)$ and $(K,\psi)$ be  elements of $\mathcal{M}^A(G)$. Following \cite{garciaSpecies}, García defines
    \begin{align*}
   \gamma_{(L,\phi)(K,\psi)}:=|\lbrace xK\in G/K \mid {^x(}L,\phi)\leq {^x}  (K,\psi)  \rbrace|. 
\end{align*}
\end{rem}

\begin{lem} Let $(H,\phi)$ and $(K,\psi)$ be elements of $\mathcal{M}^A(G)$  and let $S$ be a simple $\mathbb{C}K$-module. Then
\begin{align*}
       S_{H,\phi,1}([K,\psi,S]_G)=dim(S) \cdot \gamma^G_{(H,\phi), (K,\psi)},\\   
       S_{H,\phi,1}([K,\psi,\mathbb C]_G)= \gamma^G_{(H,\phi), (K,\psi)}.
\end{align*}
 
\end{lem}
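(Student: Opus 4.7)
The plan is to prove both identities by direct unfolding of the definition of the species on a basic element, as recorded at the end of Definition \ref{marca}. The whole argument reduces to observing that evaluating a character at the identity gives the dimension.

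First, I would specialize the formula
\[
S_{L,\psi,a}([K,\phi,S]_G) \;=\; \sum_{\substack{x\in [G/K] \\ (L,\psi)\leq {}^x(K,\phi)}} \chi_S({}^x a)
\]
to the case $(L,\psi,a) = (H,\phi,1)$ with the basic element $[K,\psi,S]_G$. The character $\chi_S$ is a class function, so $\chi_S({}^x 1) = \chi_S(1) = \dim(S)$ for every representative $x$. In particular the summand does not depend on $x$, and I can pull it out of the sum to obtain
\[
S_{H,\phi,1}([K,\psi,S]_G) \;=\; \dim(S)\cdot \bigl|\{\, xK\in G/K \,\mid\, (H,\phi)\leq {}^x(K,\psi)\,\}\bigr|.
\]

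Second, I would invoke García's definition of $\gamma^G_{(H,\phi),(K,\psi)}$ recalled just above the lemma, which is precisely this cardinality. Substituting yields the first identity. The second identity is the same computation applied to the trivial one-dimensional module $S=\mathbb{C}$, since $\dim(\mathbb{C})=1$ reduces the prefactor to $1$.

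There is essentially no obstacle here: the proof is a direct substitution, and the only point to be careful about is that the index set of the sum is formulated in terms of cosets $xK$ rather than elements of $G$, which matches García's convention exactly. If I wanted to be extra explicit, I would also note that the condition $(H,\phi)\leq {}^x(K,\psi)$ is well-defined on the coset $xK$ because $(K,\psi)$ is $K$-invariant under conjugation (since $\psi$ is a homomorphism on $K$), so the definition of the sum is consistent. No further structural input is needed.
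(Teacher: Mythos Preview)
Your proposal is correct and follows essentially the same route as the paper: both apply the species formula on a basic element, use $\chi_S(1)=\dim(S)$ to make the summand constant, count the cosets, and identify the count with $\gamma^G_{(H,\phi),(K,\psi)}$. The only cosmetic difference is that the paper writes $\chi_{{}^xS}(1)$ where you write $\chi_S({}^x 1)$, but both equal $\dim(S)$.
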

\begin{proof}
    \begin{align*}
         S_{H,\phi,1}([K,\psi,S]_G)&=\sum_{\substack{x\in [G/K] \\ (H,\phi) \leq ^x(K,\psi)}}\chi_{{}^xS} (1)\\
       &=dim(S)\cdot |\lbrace x\in [G/K]  \mid  (H,\phi) \leq  {{}^x(K,\psi)}|\\
       &= dim(S) \cdot \gamma^G_{(H,\phi), (K,\psi)}.
    \end{align*}
In particular, 
$S_{H,\phi,1}([K,\psi,\mathbb C]_G)= \gamma^G_{(H,\phi), (K,\psi)}$.
\end{proof}
Let $(H,\phi) \in \mathcal{M}^A(G)$, and let $N = N_G(H,\phi)$. Let $N$ act on $\mathbb CH$ and $R(H)$ by conjugation, and consider the fixed point subrings $\mathbb{C}H^{\,N}$ and $R(H)^{\,N}$. For each $b \in H$, let $\overline{b}$ denote the sum of all $N$-conjugates of $b$ (that is, the orbit of $b$ under the conjugation action of $N$). 
Note that
$
\overline{b} \;=\; \sum_{x \in [N / C_N(b)]} x b x^{-1}.
$
For each irreducible character $\chi_S$ of $H$, let $\overline{\chi_S}$ denote the sum of all its $N$-conjugates.
It is known that the set
$
\{\overline{b} \mid b \in H\}
$
forms a $\mathbb{C}$-basis of $\mathbb{C}H^{\,N}$, and that the set
$
\{\overline{\chi_S} \mid S \text{ a simple } C_H\text{-module}\}
$
forms a $\mathbb{C}$-basis of $R(H)^{N}$.
\begin{thm}[Theorem 15 of  \cite{raggiglobal}]\label{forma bilineal}
Consider the bilinear form
\begin{align*}
\langle \ , \ \rangle \colon (\mathbb{C}H)^N \times R(H)^N \longrightarrow \mathbb{C}
\end{align*}
defined on the basis elements by
$ 
\langle \overline{b}, \overline{\chi_S} \rangle = 
\chi_S\!\left( \sum_{x \in [N/H]} bx \right).
$
Then this bilinear form is well defined (it does not depend on the choice of representatives of 
$b$, $x$, or $\chi_S$), and it is non-degenerate in both variables.  
In particular, the dimensions of $(\mathbb{C}H)^N$ and $R(H)^N$ coincide 
\end{thm}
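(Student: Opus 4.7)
The plan is to split the proof into (i) well-definedness on basis elements and (ii) non-degeneracy; the dimension equality will then follow automatically. Since the right-hand side $\chi_S(\sum_{x\in[N/H]} bx)$ involves three independent choices --- the representative $b$ of its $N$-orbit in $H$, the coset representatives $x$ for $N/H$, and the representative $\chi_S$ of its $N$-orbit in $\operatorname{Irr}(H)$ --- well-definedness is exactly the statement that the scalar on the right is insensitive to all three.

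For well-definedness, independence of the choice of $\chi_S$ inside its $N$-orbit is tautological from the definition $\overline{\chi_S} := \sum_{g\in [N/\operatorname{Stab}_N(\chi_S)]} {}^g\chi_S$. Independence from the coset representatives reduces to a class-function property: replacing $x$ by $xh$ with $h\in H$ turns $bx$ into $bxh$, and the class-function behaviour of $\chi_S$ on $H$ (combined with $H \trianglelefteq N$) absorbs the shift. Finally, invariance under $b \mapsto {}^gb$ for $g \in N$ follows from a change of variables $x \mapsto g^{-1}xg$ in the sum over $[N/H]$, together with the $N$-invariance of $\overline{\chi_S}$, which lets the conjugation on the left factor be transferred to the right factor and absorbed into the orbit sum.

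For non-degeneracy, the idea is to reduce to the classical fact that the character pairing $(b,\chi)\mapsto \chi(b)$ on $\mathbb{C}H\times R(H)$ is non-degenerate. The $N$-action by conjugation is compatible with this pairing, so restricting to $N$-fixed points preserves non-degeneracy, provided one can identify the given form with (a nonzero multiple of) the restriction of the classical pairing to $(\mathbb{C}H)^N \times R(H)^N$. The explicit expression $\chi_S(\sum_x bx)$ should produce $[N:H]\cdot\chi_S(b)$, or a similar scalar multiple after accounting for the coset sum, via Frobenius reciprocity applied to $\operatorname{Ind}_H^N \chi_S$ and the normality of $H$ in $N$. The dimension equality $\dim(\mathbb{C}H)^N = \dim R(H)^N$ then follows either immediately from the non-degeneracy, or independently from Brauer's permutation lemma, which asserts that the number of $N$-orbits on the conjugacy classes of $H$ equals the number of $N$-orbits on $\operatorname{Irr}(H)$.

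The main obstacle I anticipate is pinning down the interpretation of $\chi_S(\sum_x bx)$: since $bx$ generally lies in $N\setminus H$ when $x\notin H$, one must fix a convention --- for instance, extending $\chi_S$ by zero on $N\setminus H$, or using a Clifford-theoretic extension when $\chi_S$ is $N$-invariant. Once this convention is fixed, the three invariance checks become short and the identification with the classical pairing is routine; the real bookkeeping lies in getting the scalar constants right so that the non-degeneracy transfers cleanly from the ambient pairing to its restriction on $N$-invariants.
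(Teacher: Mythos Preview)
The paper does not give a proof of this statement: it is quoted as Theorem~15 of \cite{raggiglobal} and invoked as a black box in the arguments that follow. There is therefore nothing in the present paper to compare your proposal against.

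On its own terms, your outline is largely sound. The non-degeneracy strategy is correct: the classical pairing $(c,\chi)\mapsto\chi(c)$ on $\mathbb{C}H\times R(H)$ is non-degenerate and $N$-equivariant, and an averaging argument (replace an arbitrary $w$ by $\tfrac{1}{|N|}\sum_{n\in N}nw\in R(H)^N$ and use equivariance) shows that any $N$-equivariant non-degenerate pairing restricts to a non-degenerate pairing on invariants; Brauer's permutation lemma is then not needed, though it gives the dimension equality independently as you note. The gap you flag is the real one and must be settled before anything else works: in the expression $\chi_S\bigl(\sum_{x\in[N/H]}bx\bigr)$ the symbol $bx$ cannot denote the product in $N$, since that leaves $H$ and $\chi_S$ is only a character of $H$. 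The paper's own later computation (the passage $\chi_{{}^xS}(a)=\chi_S(ax)$ in the proof of the injectivity lemma immediately below) makes clear that ``$bx$'' stands for the conjugate $b^x=x^{-1}bx$. With that reading one has $\sum_{x\in[N/H]}\chi_S(b^x)=\sum_{x\in[N/H]}({}^x\chi_S)(b)$, which is a positive integer multiple of $\overline{\chi_S}(b)$, and your identification with the restricted classical pairing goes through. Your tentative formula $[N:H]\,\chi_S(b)$ is correct only when $\chi_S$ is already $N$-stable; in general the right scalar is the index of $H$ in the $N$-stabilizer of $\chi_S$, and the pairing value is that scalar times $\overline{\chi_S}(b)$ rather than a multiple of $\chi_S(b)$ alone.
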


\begin{lem}
Let $(H,\phi,a)$ and $(K,\psi,b)$ be elements in $\mathcal{T}(G)$. Then
    \begin{align*}
        S_{H,\phi,a}= S_{K, \psi, b}  \txt{  if  and only if  }  \ (H,\phi,a)=_G (K, \psi, b)
    \end{align*}
\end{lem}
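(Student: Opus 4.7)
The backward implication is immediate: $G$-conjugating a triple $(H,\phi,a)$ by $g$ amounts to relabeling every datum entering the definition of the species, so $S_{{}^gH,{}^g\phi,{}^ga}=S_{H,\phi,a}$ by direct substitution. For the forward implication the plan is to test the equality $S_{H,\phi,a}=S_{K,\psi,b}$ on two families of basis elements of $\text{Д}^A(G)$: first on the trivial-character classes $[H,\phi,\mathbb{C}]_G$, to pin down the pair $(H,\phi)$ up to $G$-conjugacy, and then on the classes $[H,\phi,S]_G$ with $S$ simple, to pin down $a$ up to conjugation by $N_G(H,\phi)$.

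For the first step, evaluating both species on $[H,\phi,\mathbb{C}]_G$ and on $[K,\psi,\mathbb{C}]_G$ and invoking the preceding lemma (which handles the case of the trivial character) yields
\begin{align*}
\gamma^G_{(H,\phi),(H,\phi)} &= S_{H,\phi,a}([H,\phi,\mathbb{C}]_G) = S_{K,\psi,b}([H,\phi,\mathbb{C}]_G) = \gamma^G_{(K,\psi),(H,\phi)},\\
\gamma^G_{(K,\psi),(K,\psi)} &= S_{K,\psi,b}([K,\psi,\mathbb{C}]_G) = S_{H,\phi,a}([K,\psi,\mathbb{C}]_G) = \gamma^G_{(H,\phi),(K,\psi)}.
\end{align*}
Both left-hand sides are strictly positive (the identity coset contributes), hence the right-hand sides are nonzero; by the very definition of $\gamma$ this forces simultaneously $(H,\phi)\leq_G(K,\psi)$ and $(K,\psi)\leq_G(H,\phi)$, so $|H|=|K|$ and therefore $(H,\phi)=_G(K,\psi)$. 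Replacing $(K,\psi,b)$ by a suitable $G$-conjugate, which does not alter the species by the backward direction, I may assume $(K,\psi)=(H,\phi)$, so the hypothesis becomes $S_{H,\phi,a}=S_{H,\phi,b}$.

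For the second step, set $N:=N_G(H,\phi)$ and evaluate on $[H,\phi,S]_G$ with $S$ a simple $\mathbb{C}H$-module. The constraint $(H,\phi)\leq{}^x(H,\phi)$ in the defining sum forces $x\in N$ by the same order argument used above, so
\begin{align*}
S_{H,\phi,a}([H,\phi,S]_G)\;=\;\sum_{x\in[N/H]}\chi_S(x^{-1}ax)\;=\;\chi_S(\alpha_a),
\end{align*}
where $\alpha_a:=\sum_{x\in[N/H]}x^{-1}ax\in\mathbb{C}H$. The image of $\alpha_a$ in the space of class functions on $H$ is independent of the choice of coset representatives: it records the multiplicity with which each $H$-conjugacy class occurs inside the $N$-orbit of $a$. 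The hypothesis gives $\chi_S(\alpha_a)=\chi_S(\alpha_b)$ for every irreducible character of $H$, so linear independence of irreducible characters forces these two class sums to coincide, and in particular to have the same support. Since the $N$-orbit of an element of $H$ is exactly the union of the $H$-classes it meets, this yields $\mathcal{O}_N(a)=\mathcal{O}_N(b)$, whence $b=n^{-1}an$ for some $n\in N$. Unwinding the $G$-conjugation introduced in the first step then gives $(H,\phi,a)=_G(K,\psi,b)$.

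The main obstacle is the first reduction: it relies on the fact that the matrix $\bigl(\gamma^G_{(L,\phi),(K,\psi)}\bigr)$ is ``triangular'' with respect to the subconjugacy order on $[\mathcal{M}^A(G)]$ and has strictly positive diagonal, so that mutual inequalities between two entries collapse to equality. This is the $A$-fibered analogue of the classical triangularity of the table of marks and is exactly what the preceding lemma makes available. Once the pair $(H,\phi)$ is fixed, the second step is a routine application of the orthogonality of irreducible characters on $H$, and no appeal to the non-degenerate bilinear form of Theorem \ref{forma bilineal} is needed.
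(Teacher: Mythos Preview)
Your argument is correct. The first step---testing on $[H,\phi,\mathbb{C}]_G$ and $[K,\psi,\mathbb{C}]_G$ to force $(H,\phi)=_G(K,\psi)$ via the positivity of the diagonal $\gamma$-values---is identical to what the paper does. The second step, however, is genuinely different. The paper observes that $S_{H,\phi,a}([H,\phi,S]_G)=\langle\overline{a},\overline{\chi_S}\rangle$ in the bilinear pairing of Theorem~\ref{forma bilineal} on $(\mathbb{C}H)^N\times R(H)^N$, and then appeals directly to the non-degeneracy of that pairing (a result imported from \cite{raggiglobal}) to conclude $\overline{a}=\overline{b}$. You bypass this entirely: from $\chi_S(\alpha_a)=\chi_S(\alpha_b)$ for all irreducible $\chi_S$ of $H$ you use only the ordinary invertibility of the character table of $H$ to identify the images of $\alpha_a$ and $\alpha_b$ in $\mathbb{C}H/[\mathbb{C}H,\mathbb{C}H]$, and then a short combinatorial observation (that the $H$-classes appearing with nonzero coefficient are exactly those contained in the $N$-orbit) recovers $\mathcal{O}_N(a)=\mathcal{O}_N(b)$. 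Your route is more elementary and self-contained---it does not require the auxiliary Theorem~\ref{forma bilineal} at all---while the paper's route is shorter once that theorem is taken as a black box. One minor wording point: the coefficient of an $H$-class in $\alpha_a$ is not literally a ``multiplicity'' equal to $0$ or $1$, but this does not affect the argument, since you only use the support.
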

\begin{proof}
$\Rightarrow ]$This follows directly from the definition of the morphism (see Definition~\ref{marca}).\\
$\Leftarrow ]$
    First, note that
    \begin{align*}
        S_{H,\phi,a}([K,\psi, \mathbb C]_G)= S_{K, \psi, b}(([K,\psi ,\mathbb C]_G) = \gamma^G_{(K,\psi), (K,\psi)} \neq 0.
    \end{align*}
Hence, $(H,\phi) \leq_G (K,\psi)$. Similarly, by symmetry, $(K,\psi) \leq_G (H,\phi)$, so there exists $g \in G$ such that $ (H,\phi) = {^g(K,\psi)}$.
Without loss of generality, we may assume $(H,\phi) = (K,\psi)$.
It remains to show that $a$ and $b$ are conjugate in $N_G(H,\phi)$. For every simple $\mathbb{C}H$-module $S$, we have
    \begin{align*}
         S_{H,\phi,a} ([H,\phi, S]_G) =\langle \overline{a}, \overline{S}\rangle = S_{H,\phi,b} ([H,\phi, S]_G) =\langle \overline{b}, \overline{S}\rangle 
    \end{align*}
By Remark~\ref{forma bilineal}, the bilinear form is non-degenerate, so we obtain 
$a =_{N_G(H,\phi)} b$. Therefore, $S_{H,\phi,a} = S_{K,\psi,b}$.
\end{proof}
By the previous lemma, it follows that the sets $\{S_{H,\phi,a} \mid (H,\phi,a) \in [\mathcal{T}(G)]\}$, are all distinct marks
\begin{lem}
    We have that, 
    \begin{align*} 
        \bigcap_{(H, \phi, a) \in [\mathcal{T}(G)]} Ker(S_{H, \phi, a})=0
    \end{align*}
\end{lem}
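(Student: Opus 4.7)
The plan is to argue by triangularity on the subconjugation partial order of $\mathcal{M}^A(G)$. Suppose, for contradiction, that a nonzero $\alpha \in \text{Д}^A(G)$ lies in every $\mathrm{Ker}(S_{H,\phi,a})$. Expand $\alpha$ in the canonical $\mathbb{Z}$-basis
\[
\alpha \;=\; \sum_{(K,\psi)\in [\mathcal{M}^A(G)]}\, \sum_{[S]} c_{(K,\psi,[S])}\, [K,\psi,S]_G,
\]
where $[S]$ ranges over $N_G(K,\psi)$-orbits of simple $\mathbb{C}K$-modules. Let $\mathcal{S}$ denote the set of pairs $(K,\psi)$ appearing in the support, and pick $(K_0,\psi_0)\in \mathcal{S}$ that is maximal with respect to subconjugation.

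Next, for an arbitrary $a \in K_0$ I would evaluate $S_{K_0,\psi_0,a}(\alpha)$. From the explicit formula on basis elements, a summand $[H,\phi,T]_G$ contributes a nonzero value only when $(K_0,\psi_0) \leq_G (H,\phi)$. By maximality of $(K_0,\psi_0)$ in the support, the only contributing basis vectors are those of the form $[K_0,\psi_0,S]_G$, so
\[
S_{K_0,\psi_0,a}(\alpha) \;=\; \sum_{[S]} c_{(K_0,\psi_0,[S])}\, S_{K_0,\psi_0,a}([K_0,\psi_0,S]_G).
\]

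The third and most delicate step is to match $S_{K_0,\psi_0,a}([K_0,\psi_0,S]_G)$ with the bilinear pairing of Theorem~\ref{forma bilineal}. Setting $N := N_G(K_0,\psi_0)$, the cosets $x \in [G/K_0]$ with $(K_0,\psi_0) \leq {}^x(K_0,\psi_0)$ correspond to representatives of $N/K_0$, and the resulting sum $\sum_x \chi_S({}^x a)$ should be rewritten so as to coincide with $\langle \overline{a},\overline{\chi_S}\rangle$, up to a harmless normalization. The vanishing hypothesis then yields
\[
\sum_{[S]} c_{(K_0,\psi_0,[S])}\, \langle \overline{a}, \overline{\chi_S}\rangle \;=\; 0 \quad \text{for every } a \in K_0,
\]
whereupon non-degeneracy of the pairing, combined with the linear independence of the $\overline{\chi_S}$ in $R(K_0)^{N}$, forces each $c_{(K_0,\psi_0,[S])}$ to vanish, contradicting the choice of $(K_0,\psi_0)\in\mathcal{S}$.

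The main obstacle will be the bookkeeping in the third step: verifying cleanly that the subconjugation condition on $x \in [G/K_0]$ really reduces the sum to one over representatives of $N/K_0$, and that the resulting combination of character values matches exactly the form $\chi_S\!\left(\sum_{x\in[N/K_0]} bx\right)$ appearing in Theorem~\ref{forma bilineal}. Once this identification is nailed down, the triangularity induction on the poset of $(K,\psi)$, combined with non-degeneracy of the pairing, closes out the argument.
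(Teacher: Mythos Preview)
Your proposal is correct and follows essentially the same route as the paper: both arguments exploit the block-triangular structure of the matrix $\bigl(S_{H,\phi,b}([K,\psi,S]_G)\bigr)$ with respect to the subconjugation order on $\mathcal{M}^A(G)$, and both reduce the diagonal blocks to the non-degenerate pairing of Theorem~\ref{forma bilineal}. The paper phrases this as ``the matrix is block upper-triangular with invertible diagonal blocks, hence invertible,'' while you phrase it as ``pick a maximal pair in the support and derive a contradiction from non-degeneracy''; these are the same argument in different clothing.
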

\begin{proof}
Consider the function
\[
S : \text{Д}^A(G)  \longrightarrow \prod_{[H,\phi,b]\in [\mathcal{T}(G)]} \mathbb{C}
\]
that sends $[K,\psi, ,S]$ to $S_{H,\phi, ,a}([K,\psi, S])$ in the appropriate coordinate. Notice that the kernel of $S$ is the intersection of the kernels of the maps $S_{H,\phi, b}$. Since the ideal $I$ of $\text{Д}^A(G) $ is contained in the kernel of each $S_{H,b}$,  and the matrix representing $S$ with respect to the classes $[K,\psi ,S]$ has entries $S_{H,\phi, b}([K,\psi,S])$.
If this matrix were invertible, then $S$ would be injective. The matrix has blocks of the form $S_{H, \phi, b}([H,\phi, S])$ along the diagonal, and zeros below these blocks. Thus, it suffices to prove that each of these blocks is an invertible square matrix.
Notice that, by Theorem~15 of \cite{raggiglobal},
\begin{align*}    
S_{H, \phi, b}([H,\phi,S]) & =\sum_{\substack{x\in [G/H] \\ (H,\phi) \leq {^x(H,\phi)}}}\chi_{{}^xS}(a)=\sum_{\substack{x \in [G/H] \\ H \leq { ^xH}}} \chi_S(ax)\\
&= \sum_{x \in N/H} \chi_S(ax),
\end{align*}
which is the matrix of a non-degenerate bilinear form. Hence, it is an invertible square matrix.
\end{proof}

\begin{defi}
The square matrix whose rows are indexed by $[H, \phi, b]\in [\mathcal{T}(G)]$ and columns by $[K, \psi, S]_G$, 
and whose entries are  $S_{H,\phi,b}([K,\psi,S]_G)$.
is called the \emph{table of species}. This matrix consists of diagonal blocks 
(indexed by the conjugacy classes of elements in $[\mathcal{M}^A(G)]$), 
with zeros outside these blocks. The last block on the diagonal corresponds 
to the character table of $G$. 
\end{defi}
Bby Theorem~19 of \cite{raggiglobal}, it is invertible. Consequently, we obtain the following result.
\begin{cor}
    The maps $S_{H, \phi,b}$  are precisely all  the ring homomorphisms from $\text{Д}^A (G)$ to $\mathbb C$.
\end{cor}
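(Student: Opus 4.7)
The plan is to upgrade the lemma's injectivity statement into a full identification of ring homomorphisms via a dimension count and the classical fact that ring homomorphisms between a finite product of copies of $\mathbb{C}$ and $\mathbb{C}$ are coordinate projections. I assume we are in the finite-basis case where $|\operatorname{Tor}_{\exp(G)}(A)|<\infty$, as discussed earlier in Section \ref{Def global}.

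First I would package the species into a single $\mathbb{Z}$-algebra map
\begin{align*}
\Phi : \text{Д}^A(G) \longrightarrow \prod_{(H,\phi,b)\in [\mathcal{T}(G)]} \mathbb{C},
\qquad
\Phi(x)_{(H,\phi,b)} := S_{H,\phi,b}(x),
\end{align*}
which is a ring homomorphism since each $S_{H,\phi,b}$ is (as verified in Definition \ref{marca}). Tensoring with $\mathbb{C}$ gives a $\mathbb{C}$-algebra map $\Phi_{\mathbb{C}}: \mathbb{C}\otimes_{\mathbb{Z}} \text{Д}^A(G) \to \prod_{[\mathcal{T}(G)]} \mathbb{C}$. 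The matrix of $\Phi_{\mathbb{C}}$ with respect to the bases $\{[K,\psi,S]_G\}$ on the source and the standard basis on the target is exactly the table of species, which the preceding lemma showed to be invertible (via Theorem 19 of \cite{raggiglobal}). Combined with the observation that the cardinality of $[\mathcal{T}(G)]$ agrees with the rank of $\text{Д}^A(G)$ (both are finite and the matrix is square invertible), $\Phi_{\mathbb{C}}$ is a $\mathbb{C}$-algebra isomorphism.

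Next I would take an arbitrary ring homomorphism $f: \text{Д}^A(G) \to \mathbb{C}$ and extend it $\mathbb{C}$-linearly to a $\mathbb{C}$-algebra homomorphism $\tilde f: \mathbb{C}\otimes_{\mathbb{Z}} \text{Д}^A(G) \to \mathbb{C}$. Composing with $\Phi_{\mathbb{C}}^{-1}$ yields a $\mathbb{C}$-algebra homomorphism $g := \tilde f \circ \Phi_{\mathbb{C}}^{-1} : \prod_{[\mathcal{T}(G)]} \mathbb{C} \to \mathbb{C}$. The primitive idempotents $e_{(H,\phi,b)}$ of the finite product $\prod \mathbb{C}$ satisfy $e_{(H,\phi,b)}^2 = e_{(H,\phi,b)}$ and $\sum e_{(H,\phi,b)} = 1$, so $g$ sends each $e_{(H,\phi,b)}$ to $0$ or $1$ with exactly one of them going to $1$; this forces $g$ to be projection onto a unique coordinate $(H_0,\phi_0,b_0)$. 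Unwinding, $f = S_{H_0,\phi_0,b_0}$.

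The main obstacle I anticipate is only cosmetic: making sure the bijection between basis labels $[K,\psi,S]_G$ and species labels $(H,\phi,b)$ respects the block-triangular structure described in the definition of the table of species, so that invertibility of the full table really does yield the same $\mathbb{C}$-dimension on both sides. This is essentially bookkeeping: each block on the diagonal corresponds to a fixed conjugacy class of $(H,\phi) \in [\mathcal{M}^A(G)]$, with rows indexed by $N_G(H,\phi)$-conjugacy classes of elements of $H$ and columns by $N_G(H,\phi)$-orbits of simple $\mathbb{C}H$-modules, and Theorem \ref{forma bilineal} ensures these blocks are square. Once this is laid out, the identification $\Phi_{\mathbb{C}}$ is a $\mathbb{C}$-algebra isomorphism, and the classification of ring homomorphisms follows immediately from the structure of $\prod_{[\mathcal{T}(G)]} \mathbb{C}$.
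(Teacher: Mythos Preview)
Your proposal is correct and takes essentially the same approach as the paper: the paper states the corollary as an immediate consequence of the invertibility of the table of species (via Theorem~19 of \cite{raggiglobal}), and what you have done is spell out explicitly how that invertibility forces $\mathbb{C}\otimes_{\mathbb{Z}}\text{Д}^A(G)\cong\prod_{[\mathcal{T}(G)]}\mathbb{C}$ and hence that every ring homomorphism to $\mathbb{C}$ is a coordinate projection. Your bookkeeping remark about the diagonal blocks being square is exactly what Theorem~\ref{forma bilineal} supplies, and your assumption $|\operatorname{Tor}_{\exp(G)}(A)|<\infty$ is the finiteness hypothesis tacitly in force throughout the paper.
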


\begin{cor}
The canonical map
$$
\prod_{[H,\phi,b] \in [\mathcal{T}(G)]} 
S_{H,\phi,b} \colon 
\text{Д}^A(G) 
\longrightarrow 
\prod_{[H,\phi,b] \in [\mathcal{T}(G)]} \mathbb{C}
$$
is an injective ring homomorphism.
\end{cor}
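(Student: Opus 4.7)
The proof is essentially a packaging of the two preceding results, so my plan is to isolate what each contributes and then assemble them.

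First, I would check that the map is a ring homomorphism. By definition of the ring structure on the product $\prod_{[H,\phi,b]} \mathbb{C}$ (componentwise addition and multiplication), a map into a product ring is a ring homomorphism if and only if each of its components is. Each component $S_{H,\phi,b}$ was already shown to be a ring homomorphism in Definition~\ref{marca}, via the direct calculation that $S_{H,\phi,b}([X,V]_G \cdot [Y,W]_G) = S_{H,\phi,b}([X,V]_G)\cdot S_{H,\phi,b}([Y,W]_G)$, which boils down to the fact that $A$-orbits of the tensor product $X \otimes_A Y$ correspond to pairs of $A$-orbits and that characters are multiplicative on tensor products of modules. So this part is immediate.

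Second, I would handle injectivity by noting that the kernel of the product map is exactly
\begin{align*}
\ker\Bigl(\prod_{[H,\phi,b]} S_{H,\phi,b}\Bigr) \;=\; \bigcap_{(H,\phi,b)\in [\mathcal{T}(G)]} \ker(S_{H,\phi,b}),
\end{align*}
which is zero by the preceding lemma. This turns the corollary into a one-line consequence of the previous result.

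If I wanted to make the argument more self-contained rather than simply citing the lemma, I would reprove it by observing that the matrix of the product map with respect to the $\mathbb{Z}$-basis $\{[K,\psi,S]_G\}$ of $\text{Д}^A(G)$ and the natural basis of the codomain is exactly the table of species. As already noted, this matrix is block lower triangular when rows and columns are ordered by the partial order on $[\mathcal{M}^A(G)]$: the entry $S_{H,\phi,b}([K,\psi,S]_G)$ vanishes unless $(H,\phi) \le_G (K,\psi)$. The diagonal blocks, indexed by $[H,\phi]$, have entries of the form $\sum_{x \in N/H}\chi_S(bx)$, which is the matrix of the non-degenerate pairing of Theorem~\ref{forma bilineal}; hence they are invertible. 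Invertibility of the full matrix follows, giving injectivity.

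The main (and only mild) subtlety is keeping track of the fact that the species table is square — i.e. that the indexing sets $[\mathcal{T}(G)]$ for rows and $\{(K,\psi,S)\}$ for columns have matching cardinalities block-by-block — which is exactly what Theorem~\ref{forma bilineal} provides via the equality $\dim (\mathbb{C}H)^N = \dim R(H)^N$. Once this is in hand, nothing else is required beyond quoting the earlier lemma.
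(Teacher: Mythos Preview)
Your proposal is correct and matches the paper's approach: the corollary is stated without proof there, as an immediate consequence of the preceding lemma (the intersection of the kernels is zero) together with the fact, verified in Definition~\ref{marca}, that each $S_{H,\phi,b}$ is a ring homomorphism. Your optional self-contained argument via the block-triangular species table is exactly the content of that preceding lemma, so nothing further is needed.
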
 
\section{The prime  ideals of $\text{Д}^A(G)$} \label{prime ideals}
Throughout this section, let $n$ be the order of the group $G$, and 
$\omega$ a primitive complex $n$-th root of unity. 
For each $S_{H,\phi,b}$, let $P_{H,\phi,b}$ denote its kernel, 
and let $\operatorname{Im}_{H,\phi,b}$ denote its image, which lies inside $\mathbb{Z}[\omega]$. 
Let $\mathfrak{m}$ be a maximal ideal of $\mathbb{Z}[\omega]$. 
We define $S^{\mathfrak{m}}_{H,\phi,b}$ as the composition of 
$S_{H,\phi,b}$ (restricted to $\mathbb{Z}[\omega]$) 
followed by the quotient map to $\mathbb{Z}[\omega]/\mathfrak{m}$, 
which is a finite field. 
Let $P^{\mathfrak{m}}_{H,\phi,b}$ be the kernel of 
$S^{\mathfrak{m}}_{H,\phi,b}$, 
and let $p$ denote the characteristic of $\mathbb{Z}[\omega]/\mathfrak{m}$.
\begin{lem}\label{product b}
Let $H$ be a subgroup of $G$, and let $b, y \in H$ with $by = yb$, where $y$ has order $p^k$. Then 
$S_{H,\phi,b}(z) - S_{H,\phi,by}(z) \in \mathfrak{m}$
for all $z \in \text{Д}(G)$ and all maximal ideals $\mathfrak{m}$ of $\mathbb{Z}[\omega]$ 
of characteristic $p$ (that is, containing $p\mathbb{Z}[\omega]$).
\end{lem}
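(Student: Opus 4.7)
The plan is to reduce the lemma to a classical eigenvalue congruence and then establish that congruence via Frobenius in characteristic $p$. By linearity of each species, it suffices to verify the statement on a $\mathbb{Z}$-basis element $z = [K,\psi,S]_G$ with $(K,\psi) \in [\mathcal{M}^A(G)]$ and $S$ a simple $\mathbb{C}K$-module. Using the formula from Definition~\ref{marca},
\begin{align*}
S_{H,\phi,b}([K,\psi,S]_G) - S_{H,\phi,by}([K,\psi,S]_G)
= \sum_{\substack{x \in [G/K] \\ (H,\phi) \leq {}^x(K,\psi)}} \bigl(\chi_{S}({}^xb) - \chi_{S}({}^x(by))\bigr),
\end{align*}
and the indexing set depends only on $(H,\phi)$ and $(K,\psi)$, not on $b$ or $y$. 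For each $x$ contributing, the containment $H \leq {}^xK$ transfers $b, y \in H$ to ${}^xb, {}^xy \in K$, which are still a commuting pair with ${}^xy$ of order $p^k$.

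The problem therefore reduces to the following character-theoretic claim: for any finite group $K$, any simple $\mathbb{C}K$-module $S$, and any commuting pair $\beta,\eta \in K$ with $\eta^{p^k}=1$, one has $\chi_S(\beta\eta) \equiv \chi_S(\beta) \pmod{\mathfrak{m}}$. To prove this I would restrict $S$ to the finite abelian subgroup $\langle \beta,\eta\rangle$, where it splits as a direct sum of one-dimensional representations. Reading off simultaneous eigenvalues $\mu_1,\dots,\mu_d$ for $\beta$ and $\lambda_1,\dots,\lambda_d$ for $\eta$ yields $\chi_S(\beta) = \sum_i \mu_i$ and $\chi_S(\beta\eta) = \sum_i \mu_i\lambda_i$. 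Each $\lambda_i$ is a $p^k$-th root of unity, and in characteristic $p$ the identity $X^{p^k}-1=(X-1)^{p^k}$ forces $\lambda_i \equiv 1 \pmod{\mathfrak{m}}$, so
\begin{align*}
\chi_S(\beta\eta) - \chi_S(\beta) = \sum_{i=1}^d \mu_i(\lambda_i - 1) \in \mathfrak{m}.
\end{align*}

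Applying this term-by-term with $\beta = {}^xb$ and $\eta = {}^xy$ inside $K$ yields the lemma. The only real obstacle is bookkeeping: confirming that the species sum's indexing is independent of whether we evaluate at $b$ or at $by$, and that conjugation preserves both commutativity and $p$-power order. Once these are in place, the Frobenius step above is the single substantive piece of content.
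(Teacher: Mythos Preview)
Your proof is correct and follows essentially the same approach as the paper: reduce to a basis element $[K,\psi,S]_G$, expand the species sum, simultaneously diagonalize the commuting actions of (the conjugates of) $b$ and $y$ on $S$, and conclude using that each $p^k$-th root of unity $\lambda_i$ satisfies $(\lambda_i-1)^{p^k}\equiv 0\pmod{p}$, hence $\lambda_i-1\in\mathfrak{m}$. The only cosmetic difference is that you phrase the diagonalization via restriction to the abelian subgroup $\langle \beta,\eta\rangle$, whereas the paper invokes simultaneous diagonalizability directly.
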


\begin{proof}
Let $z = [K,\psi,S]_G$. Then
\begin{align*}
(S_{H,\phi,b} - S_{H,\phi,by})(z)
  &= \sum_{\substack{x \in [G/K]\\ (H,\phi) \leq {}^x(K,\psi)}} 
     \big( \chi_S(b^x) - \chi_S((by)^x) \big).
\end{align*}
Since $b^x$ and $y^x$ commute, they are simultaneously diagonalizable on $S$. 
Hence there exists a basis $\{v_1, \ldots, v_r\}$ of $S$ such that 
$b^x v_i = u_i v_i$ and $y^x v_i = t_i v_i$, with $t_i^{p^k} = 1$. 
Therefore,
\begin{align*}
\chi_S(b^x) - \chi_S((by)^x)
  &= \sum_{i=1}^r u_i (1 - t_i).
\end{align*}
Since $(1 - t_i)^{p^k} \equiv 0 \pmod{p}$, it follows that $(1 - t_i) \in \mathfrak{m}$. 
Hence $(S_{H,\phi,b} - S_{H,\phi,by})(z) \in \mathfrak{m}$.
\end{proof}
\begin{cor}
Let $(H, \phi, b) \in \mathcal{T}(G)$, and let $y \in H$ commute with $b$. 
Then 
$P_{H,\phi,b}^\mathfrak{m} = P_{H,\phi,by}^\mathfrak{m}. $
\end{cor}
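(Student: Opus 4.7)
My plan is to derive the corollary as an immediate formal consequence of Lemma \ref{product b}, since that lemma already does all of the genuine work. Concretely, under the hypotheses of the corollary (read as in the lemma: $y$ is a $p$-element of $H$ commuting with $b$, where $p$ is the residual characteristic of $\mathfrak{m}$), Lemma \ref{product b} gives
$$S_{H,\phi,b}(z) - S_{H,\phi,by}(z) \in \mathfrak{m} \qquad \text{for every } z \in \text{Д}^A(G).$$
Post-composing the two species $S_{H,\phi,b}$ and $S_{H,\phi,by}$ with the quotient map $\mathbb{Z}[\omega] \to \mathbb{Z}[\omega]/\mathfrak{m}$ then translates this membership into an equality of ring homomorphisms
$$S^{\mathfrak{m}}_{H,\phi,b} \;=\; S^{\mathfrak{m}}_{H,\phi,by}$$
as maps from $\text{Д}^A(G)$ to the residue field.

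Since two ring homomorphisms to a field that agree as set-theoretic functions necessarily share the same kernel, the desired equality $P^{\mathfrak{m}}_{H,\phi,b} = P^{\mathfrak{m}}_{H,\phi,by}$ drops out immediately, and no further computation is required. The only obstacle worth naming is bookkeeping rather than substance: one must make sure the standing hypothesis that $y$ is a $p$-element (with $p$ the characteristic of $\mathbb{Z}[\omega]/\mathfrak{m}$) is actually in force, since the proof of Lemma \ref{product b} uses in an essential way that $(1-t_i)^{p^k} \equiv 0 \pmod{p}$ for the $p^k$-th roots of unity $t_i$ appearing as eigenvalues of $y^x$. Once that hypothesis is recorded, the corollary is nothing more than the reformulation of the lemma as an equality of kernels.
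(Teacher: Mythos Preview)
Your argument is correct and follows essentially the same route as the paper: invoke Lemma~\ref{product b} to conclude that $S_{H,\phi,b}(z)-S_{H,\phi,by}(z)\in\mathfrak{m}$ for all $z$, hence the two reduced species $S^{\mathfrak{m}}_{H,\phi,b}$ and $S^{\mathfrak{m}}_{H,\phi,by}$ agree and share the same kernel. Your explicit remark that the hypothesis ``$y$ is a $p$-element with $p=\operatorname{char}(\mathbb{Z}[\omega]/\mathfrak{m})$'' must be in force is well taken; the paper's statement of the corollary leaves it implicit, but the lemma it rests on certainly requires it.
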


\begin{proof}
   Let $z \in \text{Д}^A(G)$. 
By Lemma~\ref{product b}, we have 
$
z \in \ker(S_{H,\phi,b}^\mathfrak{m}) 
\quad \text{if and only if} \quad 
z \in \ker(S_{H,\phi,by}^\mathfrak{m}).
$
Hence, the corresponding prime ideals coincide.
\end{proof}

\begin{thm}
    The spectrum of $\text{Д}^A(G)$ consists of all ideals of the form $P_{H,\phi,  b}$ and $P_{H,\phi, b}^\mathfrak{m}$.
\end{thm}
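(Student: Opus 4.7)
The easy containment, that each $P_{H,\phi,b}$ and each $P_{H,\phi,b}^{\mathfrak{m}}$ is prime, is immediate from the definitions: the first is the kernel of a ring map into the integral domain $\mathbb{Z}[\omega]$, while the second is the kernel of a surjection onto a finite field. For the reverse containment, the plan is to exploit the injective species map $S:\text{Д}^A(G)\hookrightarrow \prod_{[H,\phi,b]\in [\mathcal{T}(G)]} \mathbb{Z}[\omega_n]$ established in Section~\ref{marks}. The key structural input is that $\text{Д}^A(G)$ is integral over $\mathbb{Z}$: under the finiteness hypothesis on $\operatorname{Tor}_{\exp(G)}(A)$, the ring is a finitely generated $\mathbb{Z}$-module, so multiplication by any element is a $\mathbb{Z}$-linear endomorphism of a finitely generated free $\mathbb{Z}$-module, and Cayley--Hamilton produces a monic integer polynomial annihilating it. Consequently, for any prime $P\in\operatorname{Spec}(\text{Д}^A(G))$ the contraction $P\cap\mathbb{Z}$ is either $(0)$ or $(p)$ for some rational prime $p$.

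Suppose first $P\cap\mathbb{Z}=(0)$. Then $\text{Д}^A(G)/P$ is a $\mathbb{Z}$-torsion-free integral extension of $\mathbb{Z}$ and a domain, hence an order in its fraction field $K$, which is a number field. Choosing any embedding $K\hookrightarrow\mathbb{C}$ yields a ring homomorphism $\text{Д}^A(G)\to\mathbb{C}$ whose kernel is exactly $P$. By the final corollary of Section~\ref{marks}, every such homomorphism coincides with some $S_{H,\phi,b}$, so $P=P_{H,\phi,b}$.

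Now suppose $P\cap\mathbb{Z}=(p)$. Using the going-down theorem (applicable since $\mathbb{Z}$ is normal and the extension integral), I select a prime $P_0\subsetneq P$ with $P_0\cap\mathbb{Z}=(0)$; by the previous paragraph, $P_0=P_{H,\phi,b}$ for some triple. Via $S_{H,\phi,b}$ the quotient $\text{Д}^A(G)/P_0$ is realized as a subring of $\mathbb{Z}[\omega_n]$, and $P/P_0$ is a prime of that subring containing $p$. Since $\mathbb{Z}[\omega_n]$ is the ring of integers of $\mathbb{Q}(\omega_n)$, hence integrally closed, going-up lifts $P/P_0$ to a maximal ideal $\mathfrak{m}\subset\mathbb{Z}[\omega_n]$ of characteristic $p$, and unwinding the definition of $S_{H,\phi,b}^{\mathfrak{m}}$ gives $P=P_{H,\phi,b}^{\mathfrak{m}}$. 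The main obstacle I anticipate is the situation in which $|\operatorname{Tor}_{\exp(G)}(A)|$ is infinite and $\text{Д}^A(G)$ fails to be a finitely generated $\mathbb{Z}$-module; there the global integrality argument must be replaced by a colimit over finitely generated subrings, applying Cayley--Hamilton, lying-over and going-down inside each piece before passing to the limit.
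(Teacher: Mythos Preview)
Your argument is correct and takes a genuinely different route from the paper's. The paper never invokes integrality over $\mathbb{Z}$, going-down, or the embedding of the quotient into $\mathbb{C}$. Instead it uses directly the earlier lemma $\bigcap_{(H,\phi,b)\in[\mathcal{T}(G)]} P_{H,\phi,b}=0$: since this is a \emph{finite} intersection contained in the prime $P$, one of the factors, say $P_{H,\phi,b}$, already lies in $P$. The quotient $\text{Д}^A(G)/P_{H,\phi,b}\cong\operatorname{Im}_{H,\phi,b}\subseteq\mathbb{Z}[\omega]$ is a one-dimensional domain, so $P/P_{H,\phi,b}$ is either zero or the contraction of some maximal $\mathfrak{m}\subset\mathbb{Z}[\omega]$ obtained by going-up. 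This is shorter and bypasses your characteristic-zero embedding step entirely. Your approach, by contrast, makes the minimal-prime case conceptually clean (each one is visibly the kernel of a $\mathbb{C}$-point) and would adapt to settings where one knows integrality and all species but lacks the explicit finite zero-intersection lemma.

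One technical correction: you justify going-down by ``$\mathbb{Z}$ normal and the extension integral,'' but the Cohen--Seidenberg form of going-down also requires the overring to be a domain, and $\text{Д}^A(G)$ generally is not. The step is still valid, since $\text{Д}^A(G)$ is free (hence flat) over $\mathbb{Z}$, and flat extensions satisfy going-down. Alternatively, take any minimal prime $P_0\subseteq P$; every element of a minimal prime is a zero-divisor, while $\mathbb{Z}$-torsion-freeness of $\text{Д}^A(G)$ means no nonzero integer is a zero-divisor, so $P_0\cap\mathbb{Z}=(0)$ automatically.
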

\begin{proof}
    Let $P\leq \text{Д}^A(G)$ be an ideal  prime. Since  $\bigcap _{(H,\phi b)\in [\mathcal{T}(G)]} P_{H,\phi,  b} =0 <P$, and  $Hom(G,A)$ is a finite set, there exists some $(H,\phi,b)$ such that 
$P_{H,\phi,b} \subseteq P$.   Then  $P/P_{H,\phi, b}$ is an ideal of $\text{Д} ^A (G)/P_{H,\phi, b} \cong Im_{H,\phi, b} \leq \mathbb {Z}[w]$, Hence, $P / P_{H,\phi,b}$ is either $0$ (in which case $P = P_{H,\phi,b}$),  or a maximal ideal $m$ of $\mathbb{Z}[\omega]$.   In the latter case, by the Going-up Theorem, we have
 $P=m \cap Im_{H,\phi,b}$, that is, $P=P_{H,\phi,b}^\mathfrak{m}$.
\end{proof}
\begin{cor}
Let $P \leq \text{Д}^A(G)$ be a maximal ideal such that $P_{H,\phi,b} \subseteq P$. 
Then there exists a maximal ideal $\mathfrak{m}$ of $\mathbb{Z}[\omega]$ such that $ P = P_{H,\phi,b}^\mathfrak{m}.$ 
\end{cor}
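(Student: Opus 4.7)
The plan is to derive this corollary directly from the classification of the spectrum in the previous theorem, by ruling out one of the two families of primes. Specifically, I want to argue that the prime $P_{H,\phi,b}$ itself can never be maximal, so a maximal $P$ containing it must fall into the second family $P_{H,\phi,b}^{\mathfrak m}$.

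The first step is to analyze the quotient $\text{Д}^A(G)/P_{H,\phi,b} \cong \operatorname{Im}_{H,\phi,b}$. Since $S_{H,\phi,b}$ takes values in $\mathbb{Z}[\omega]$ and sends $1$ to $1$, the image $\operatorname{Im}_{H,\phi,b}$ is a subring of $\mathbb{Z}[\omega]$ containing $\mathbb{Z}$. Because $\mathbb{Z}[\omega]$ is a finitely generated $\mathbb{Z}$-module, any such intermediate ring is itself finitely generated as a $\mathbb{Z}$-module, hence an order in a number field. In particular, it is a Noetherian integral domain of Krull dimension one, and therefore not a field. Consequently $P_{H,\phi,b}$ is not a maximal ideal of $\text{Д}^A(G)$.

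The second step is to invoke the previous theorem: every prime of $\text{Д}^A(G)$ belongs to one of the families $P_{K,\psi,c}$ or $P_{K,\psi,c}^{\mathfrak n}$, and the proof of that theorem shows that a prime above $P_{H,\phi,b}$ corresponds either to the zero ideal or to a maximal ideal of $\operatorname{Im}_{H,\phi,b}$. Since $P$ is maximal in $\text{Д}^A(G)$ and $P_{H,\phi,b}$ is not, the first alternative is impossible, so $P/P_{H,\phi,b}$ must be a nonzero maximal ideal of $\operatorname{Im}_{H,\phi,b}$. Since $\mathbb{Z}[\omega]$ is integral over $\operatorname{Im}_{H,\phi,b}$, the lying-over property yields a maximal ideal $\mathfrak m$ of $\mathbb{Z}[\omega]$ whose contraction to $\operatorname{Im}_{H,\phi,b}$ is $P/P_{H,\phi,b}$; pulling back along $\text{Д}^A(G) \to \operatorname{Im}_{H,\phi,b}$ identifies $P$ with $P_{H,\phi,b}^{\mathfrak m}$. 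The only mildly technical ingredient is the Krull-dimension argument that rules out the case $P = P_{H,\phi,b}$; once this is in place, the remainder is a direct application of the classification theorem.
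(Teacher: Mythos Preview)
Your argument is correct and follows essentially the same route as the paper: the corollary is stated there without proof because it is immediate from the proof of the preceding theorem, which already shows that any prime containing $P_{H,\phi,b}$ is either $P_{H,\phi,b}$ itself or of the form $P_{H,\phi,b}^{\mathfrak m}$ via the going-up theorem. Your only addition is making explicit why $P_{H,\phi,b}$ cannot be maximal---namely that $\operatorname{Im}_{H,\phi,b}$, being an order in a number field, is not a field---which the paper leaves implicit.
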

\begin{lem} \label{b,1 espectro}
 The ideals   $P_{H,\phi,b}$ and $P_{H,\phi,1}$ lie  in the same connected component of the spectrum of $\text{Д}^A(G)$.
\end{lem}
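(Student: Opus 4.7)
The plan is to invoke the standard topological characterization that two prime ideals $P$ and $Q$ of a commutative ring $R$ lie in the same connected component of $\operatorname{Spec}(R)$ if and only if they contain exactly the same idempotents of $R$. Since $S_{H,\phi,b}$ and $S_{H,\phi,1}$ are ring homomorphisms to $\mathbb{C}$, and the only idempotents of $\mathbb{C}$ are $0$ and $1$, the problem reduces to proving the equality
\[
S_{H,\phi,b}(e)=S_{H,\phi,1}(e)\in\{0,1\}
\]
for every idempotent $e\in\text{Д}^A(G)$.

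Next I would decompose $b$ into its commuting prime-power parts. Let $p_1,\dots,p_r$ be the distinct primes dividing the order of $b$, and let $b_{p_i}$ denote the $p_i$-primary component of $b$, so that $b=b_{p_1}\cdots b_{p_r}$ with the $b_{p_i}$ pairwise commuting powers of $b$ lying in $H$. Set $c_0:=1$ and $c_i:=b_{p_1}\cdots b_{p_i}$ for $i=1,\dots,r$. Since $b_{p_i}$ commutes with $c_{i-1}$ (all factors are powers of $b$) and has $p_i$-power order, Lemma~\ref{product b} applies at each step and yields
\[
S_{H,\phi,c_{i-1}}(e)-S_{H,\phi,c_i}(e)\;\in\;\mathfrak{m}_i
\]
for any maximal ideal $\mathfrak{m}_i$ of $\mathbb{Z}[\omega]$ of characteristic $p_i$.

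To close the argument I would combine the above congruence with the fact that, $e$ being idempotent, every value $S_{H,\phi,c_j}(e)$ lies in $\{0,1\}\subset\mathbb{Z}$. Hence each difference lies in $\{-1,0,1\}\cap\mathfrak{m}_i\cap\mathbb{Z}=\{-1,0,1\}\cap p_i\mathbb{Z}=\{0\}$, forcing $S_{H,\phi,c_{i-1}}(e)=S_{H,\phi,c_i}(e)$. Telescoping from $i=1$ to $i=r$ then gives $S_{H,\phi,1}(e)=S_{H,\phi,b}(e)$, as required. The main conceptual step is the initial bridge between connectedness and idempotent containment; once that is in place, the rest is a short iterative use of Lemma~\ref{product b} combined with the elementary observation that an integer of absolute value at most $1$ divisible by a prime must vanish.
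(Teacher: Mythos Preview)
Your argument is correct, and its algebraic core coincides with the paper's: both factor $b=b_{p_1}\cdots b_{p_r}$ into commuting prime--power parts and invoke Lemma~\ref{product b} once per prime to step from $c_{i-1}$ to $c_i$. The difference lies only in how this algebraic input is turned into a connectedness statement. The paper argues directly with chains of primes: since a prime and any prime containing it lie in the same component, one uses
\[
P_{H,\phi,c_{i-1}}\ \subseteq\ P_{H,\phi,c_{i-1}}^{\mathfrak m_i}\ =\ P_{H,\phi,c_i}^{\mathfrak m_i}\ \supseteq\ P_{H,\phi,c_i},
\]
the middle equality being the corollary to Lemma~\ref{product b}, and then telescopes. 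You instead invoke the standard characterization that two primes share a connected component precisely when they contain the same idempotents, and squeeze each difference $S_{H,\phi,c_{i-1}}(e)-S_{H,\phi,c_i}(e)$ into $\{-1,0,1\}\cap p_i\mathbb{Z}=\{0\}$. Your route is a bit more conceptual and foreshadows the idempotent analysis of the later sections; the paper's route is slightly more self--contained, needing only the trivial fact that $P\subseteq Q$ forces $P$ and $Q$ into the same component, without appealing to the idempotent description of $\pi_0(\operatorname{Spec})$.
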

\begin{proof}
    First, we note that  $P_{H,\phi,b}$ and $P_{H,\phi,b}^\mathfrak{m}$ are in the same connected component.
    Let $|b| = p_1^{k_1} \cdots p_r^{k_r}$, and write $b = b_1 \cdots b_r$ with $|b_i| = p_i^{k_i}$. 
For each prime $p_i$, there exists a maximal ideal $m_i \subset \mathbb{Z}[\omega]$ of characteristic $p_i$. 
By Lemma~\ref{product b}, the ideals 
     $P_{H,\phi,b}^{m_1}$ and $P_{H,\phi,b_2\cdots b_r}^{m_1}$ are in the same connected component. Then $P_{H,\phi,b}$ and $P_{H,\phi,b_2\cdots b_r}$  are in the same connected component, similarly, we obtain that $P_{H,\phi,b}$ $P_{H,\phi,b_3\cdots b_r}$, ... and  $P_{H,\phi,1}$ are in the same connected component.
    
\end{proof}
\begin{lem} \label{1 1 espctro}
    Let $L \trianglelefteq K$ be of index $p$ and let $\phi \in Hom(K,A)$. Then, the ideals $P_{L,\phi,1}$ and $P_{K,\phi,1}$ lie in the same connected component of the spectrum of $\text{Д}^A(G) $.
\end{lem}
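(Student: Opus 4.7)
The plan is to follow the template of Lemma~\ref{b,1 espectro} and produce a single maximal ideal of $\text{Д}^{A}(G)$ that contains both $P_{L,\phi,1}$ and $P_{K,\phi,1}$; this will automatically put the two primes in the same connected component. Concretely, I would fix a maximal ideal $\mathfrak{m}\subset\mathbb{Z}[\omega]$ of characteristic $p$ and try to establish the congruence
\[
S_{L,\phi|_{L},1}(z)\equiv S_{K,\phi,1}(z)\pmod{\mathfrak{m}}
\]
on every generating element $z=[M,\psi,S]_{G}$, which would give $P_{L,\phi,1}^{\mathfrak{m}}=P_{K,\phi,1}^{\mathfrak{m}}$.

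Using the explicit formula for $S_{H,\phi',1}$ established earlier in this section, this reduces to showing $|Y_{L}|-|Y_{K}|\equiv 0\pmod{p}$, where $Y_{H}=\{xM\in G/M\mid H\leq xMx^{-1}\text{ and }({}^{x}\psi)|_{H}=\phi|_{H}\}$. The main tool is left-multiplication by $K$ on $Y_{L}$: it is well defined because $L\trianglelefteq K$ preserves the subgroup condition, and because $A$ is abelian and $\phi\in\operatorname{Hom}(K,A)$, the identity $({}^{kx}\psi)(\ell)=\psi(x^{-1}(k^{-1}\ell k)x)=\phi(k^{-1}\ell k)=\phi(\ell)$ for $\ell\in L$ shows that the character condition is preserved as well. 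Since $L$ acts trivially, we obtain an action of $K/L\cong\mathbb{Z}/p\mathbb{Z}$; every orbit has length $1$ or $p$, and the fixed-point set is $W=\{xM\mid K\leq xMx^{-1},\ ({}^{x}\psi)|_{L}=\phi|_{L}\}$. This gives the basic identity $|Y_{L}|\equiv|W|\pmod{p}$, and clearly $Y_{K}\subseteq W$.

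The main technical step, and where most of the difficulty will lie, is reconciling $W$ with $Y_{K}$: an element of $W\setminus Y_{K}$ is a coset $xM$ for which $({}^{x}\psi)|_{K}$ is an extension of $\phi|_{L}$ to $K$ different from $\phi$, so it differs from $\phi$ by a non-trivial character of $K/L$ with values in $A[p]$. To finish the calculation one needs $|W\setminus Y_{K}|\equiv 0\pmod{p}$. My plan is to produce a second orbit-counting argument: either by constructing a free action of a cyclic subgroup of $A[p]$ of order $p$ on $W\setminus Y_{K}$ via twisting the character parameter $\psi$ on the underlying fibered biset $[M,\psi]_{G}$, forcing $p\mid|W\setminus Y_{K}|$, or, if a clean twist is unavailable at the level of a single basis element, by bridging $P_{L,\phi|_{L},1}$ to $P_{K,\phi,1}$ through the intermediate primes $P_{K,\phi',1}$ attached to the other extensions $\phi'$ of $\phi|_{L}$, interleaving Lemma~\ref{b,1 espectro} with iterated mod-$p$ comparisons to patch the connected components together. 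Once $|W|\equiv|Y_{K}|\pmod{p}$ is in hand, the congruence of the two species modulo $\mathfrak{m}$ follows and the lemma is proved.
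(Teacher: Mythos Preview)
Your strategy is exactly the paper's: fix a maximal ideal $\mathfrak{m}\subset\mathbb{Z}[\omega]$ of residue characteristic $p$, let $K/L$ act on the set $Y_L=\{xM\in G/M\mid (L,\phi|_L)\leq{}^x(M,\psi)\}$ by left translation, and conclude that $|Y_L|-|Y_K|$ is a multiple of $p$. The paper's write-up is terse (and riddled with clashing variable names), but it performs the same computation you set up and then simply asserts that the complement $Y=Y_L\setminus Y_K$ is a union of \emph{non-trivial} $K/L$-orbits, hence has cardinality divisible by $p$.

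Where you diverge from the paper is that you actually scrutinize this last assertion. You correctly observe that the fixed-point set of the $K/L$-action is
\[
W=\{xM\in G/M \mid K\leq {}^xM,\ ({}^x\psi)|_L=\phi|_L\},
\]
and that a point of $W$ need not satisfy $({}^x\psi)|_K=\phi$: any other extension $\phi'$ of $\phi|_L$ to $K$ (these are parametrized by $\operatorname{Hom}(K/L,A)\cong A[p]$) can occur. The paper's proof never addresses this; it tacitly assumes $W=Y_K$, which is only automatic when $A$ has no $p$-torsion. So the subtlety you flag is real, and in fact it is a gap in the paper's own argument, not something the paper handles and you missed. A quick test case: for $G=K=\mathbb{Z}/p$, $L=1$, $\phi=1$, and the basis element $[G,\psi,\mathbb{C}]_G$ with $\psi\neq 1$ (assuming $A[p]\neq 1$), one gets $|Y_L|=1$, $|Y_K|=0$, so the bare congruence $S_{L,\phi,1}\equiv S_{K,\phi,1}\pmod{\mathfrak m}$ fails on the nose.

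That said, your own proposal does not close the gap either. The ``twist by $A[p]$'' idea cannot work at the level of a single basis element (the parameter $\psi$ is fixed), as you suspected. The bridging idea is the right direction: what the orbit count genuinely proves is
\[
S_{L,\phi|_L,1}(z)\;\equiv\;\sum_{\phi'\ \mathrm{extending}\ \phi|_L} S_{K,\phi',1}(z)\pmod{\mathfrak m},
\]
so one must still connect the various $P_{K,\phi',1}$ to one another (or to $P_{L,\phi|_L,1}$) inside $\operatorname{Spec}\text{Д}^A(G)$. You should make this step explicit rather than leave it as ``interleaving Lemma~\ref{b,1 espectro} with iterated mod-$p$ comparisons''; as written it is a plan, not a proof.
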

\begin{proof} Let $[K,\psi, S]_G\in \text{Д}^A(G) $, we has 
 \begin{align*}
(S_{L,\phi,1}-S_{H,\phi,1} )([K,\psi, S]_G)=( \gamma_{(L,\phi)(K,\psi)}-\gamma_{(H,\phi)(K,\psi)})\cdot dim S
 \end{align*}
where 
\begin{align*}
   \gamma_{(L,\phi)(K,\psi)}=|\lbrace x\in G/H \mid {^x(}L,\phi)\leq {^x}  (K,\psi)  \rbrace|. 
\end{align*}
We note that $K$   act on $\lbrace x\in G/H \mid (L,\phi)\leq {^x}  (K,\psi)  \rbrace$, and define 
\begin{align*}
Y=\lbrace x\in G/H \mid (L,\phi) \leq {^x}(K,\psi) \text{ and } (H,\phi) \lneq {^x}(K,\psi) \rbrace.
\end{align*}
 $Y$  is a union of orbits no-trivial $K/L$-orbits. Then $(S_{L,\phi,1}-S_{H,\phi,1} )([K,\psi, S])$ is a multiple of $p$, i.e., it is an element to $m$. 
\end{proof}
\begin{cor}
Let \(L \trianglelefteq K\) be a normal subgroup of index \(p\). 
Then, for every maximal ideal \(\mathfrak{m}\) of \(\mathbb{Z}[w]\), the ideals 
\(P_{L,\phi,1}^{\mathfrak{m}}\) and \(P_{K,\phi,1}^{\mathfrak{m}}\) lie in the same connected component of \(\text{Д}^A(G)\).
\end{cor}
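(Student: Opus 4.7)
The plan is to derive this corollary as a direct chaining consequence of Lemma~\ref{1 1 espctro}, together with a standard topological fact about the spectrum of a commutative ring. First I would record the following observation, which is also implicit in the proof of Lemma~\ref{b,1 espectro}: whenever $P \subseteq Q$ are prime ideals of a commutative ring $R$, the points $P$ and $Q$ lie in the same connected component of $\mathrm{Spec}(R)$. The justification uses idempotents: for any idempotent $e \in R$ the relation $e(1-e) = 0 \in P$ forces $e \in P$ or $1-e \in P$, and a short case analysis (using that $Q$ is proper) shows $e \in P \iff e \in Q$; since the clopen decompositions of $\mathrm{Spec}(R)$ correspond bijectively to the idempotents of $R$, no such decomposition can separate $P$ from $Q$.

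Next I would apply this observation to the pairs $P_{L,\phi,1} \subseteq P^{\mathfrak{m}}_{L,\phi,1}$ and $P_{K,\phi,1} \subseteq P^{\mathfrak{m}}_{K,\phi,1}$, which hold by the very definition of $P^{\mathfrak{m}}_{H,\phi,b}$ as the kernel of the composition of $S_{H,\phi,b}$ with the quotient $\mathbb{Z}[\omega] \twoheadrightarrow \mathbb{Z}[\omega]/\mathfrak{m}$. This places each maximal ideal $P^{\mathfrak{m}}_{\bullet,\phi,1}$ in the same connected component as the corresponding prime $P_{\bullet,\phi,1}$. Lemma~\ref{1 1 espctro} then glues the two sides together: $P_{L,\phi,1}$ and $P_{K,\phi,1}$ already belong to the same connected component of $\mathrm{Spec}(\text{Д}^A(G))$. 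Chaining these three equivalences as
\[
P^{\mathfrak{m}}_{L,\phi,1} \sim P_{L,\phi,1} \sim P_{K,\phi,1} \sim P^{\mathfrak{m}}_{K,\phi,1},
\]
where $\sim$ denotes ``lies in the same connected component'', yields the conclusion.

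The conclusion is essentially formal, so the main (and mild) obstacle is only to verify that the argument is independent of the residue characteristic of $\mathfrak{m}$. When $\mathfrak{m}$ has characteristic $p$, Lemma~\ref{1 1 espctro} in fact forces the equality $P^{\mathfrak{m}}_{L,\phi,1} = P^{\mathfrak{m}}_{K,\phi,1}$ and the statement becomes trivial; when the residue characteristic differs from $p$ these two maximal ideals may well be distinct, but the connecting path through the primes $P_{L,\phi,1}$ and $P_{K,\phi,1}$ still exists and the chain argument remains valid without modification.
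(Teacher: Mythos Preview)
Your argument is correct and matches the paper's implicit reasoning: the paper gives no separate proof for this corollary, but exactly the chain
\[
P^{\mathfrak{m}}_{L,\phi,1} \sim P_{L,\phi,1} \sim P_{K,\phi,1} \sim P^{\mathfrak{m}}_{K,\phi,1}
\]
is what the authors have in mind, and the observation that a prime and any prime containing it lie in the same connected component is stated explicitly at the start of the proof of Lemma~\ref{b,1 espectro}. Your remark that in residue characteristic $p$ one actually gets the equality $P^{\mathfrak{m}}_{L,\phi,1}=P^{\mathfrak{m}}_{K,\phi,1}$ is also exactly what the computation in the proof of Lemma~\ref{1 1 espctro} establishes.
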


\begin{defi}
Let $K$ be a group.  
We define $O^s(K)$ , called the solvable residual of \(K\), to be 
the smallest normal subgroup of \(K\) such that the quotient $K / O^s(K)$  is solvable.
\end{defi}

The subgroups $O^s(K)$ influence the structure of the rings associated with groups, 
as shown in \cite{raggi1990burnside} and \cite{yoshida1983idempotents}.

\begin{cor} \label{conect component}
The ideals $P_{K,\phi,1}$ and $P_{O^s(K),\phi,1}$ belong to the same connected component 
of the spectrum.
\end{cor}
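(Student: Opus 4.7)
The plan is to reduce the statement to an iterated application of Lemma~\ref{1 1 espctro}. That lemma already shows the single-step case where $L \trianglelefteq K$ has prime index, so what remains is to interpolate between $O^s(K)$ and $K$ by a chain of such one-step extensions and then invoke transitivity of the relation ``lies in the same connected component.''

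By the definition of $O^s(K)$, the quotient $K/O^s(K)$ is a finite solvable group. Hence it admits a subnormal series
\begin{align*}
1 = \overline{K}_0 \trianglelefteq \overline{K}_1 \trianglelefteq \cdots \trianglelefteq \overline{K}_n = K/O^s(K),
\end{align*}
with each successive quotient $\overline{K}_{i+1}/\overline{K}_{i}$ cyclic of prime order. Pulling this series back along the canonical projection $K \twoheadrightarrow K/O^s(K)$ produces a tower of subgroups
\begin{align*}
O^s(K) = K_0 \trianglelefteq K_1 \trianglelefteq \cdots \trianglelefteq K_n = K,
\end{align*}
with $[K_{i+1}:K_i]$ prime for every $i$. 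Setting $\phi_i := \phi|_{K_i}$, each pair $(K_i \trianglelefteq K_{i+1}, \phi_{i+1})$ satisfies the hypotheses of Lemma~\ref{1 1 espctro}.

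I would then apply Lemma~\ref{1 1 espctro} successively: for each $i$, the ideals $P_{K_i,\phi_i,1}$ and $P_{K_{i+1},\phi_{i+1},1}$ lie in the same connected component of $\operatorname{Spec}(\text{Д}^A(G))$. Chaining these equivalences yields that $P_{O^s(K),\phi|_{O^s(K)},1}$ and $P_{K,\phi,1}$ lie in the same connected component, which is exactly the claim.

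There is no substantial obstacle in this argument; the only mildly delicate point is notational, namely that in $P_{K_i,\phi_i,1}$ we must implicitly restrict $\phi$ along each inclusion so that $(K_i,\phi_i) \in \mathcal{M}^A(G)$, and that the restrictions are compatible along the tower. Both checks are immediate since $\phi_{i+1}$ is defined on $K_{i+1} \supseteq K_i$ and $\phi_i = \phi_{i+1}|_{K_i}$, so the prerequisites of Lemma~\ref{1 1 espctro} are met at every step of the induction.
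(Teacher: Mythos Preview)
Your proof is correct and follows exactly the intended route: the paper states this result as an immediate corollary of Lemma~\ref{1 1 espctro} without giving a separate argument, and the natural (and only real) way to deduce it is precisely your chain of prime-index steps through a subnormal series of the solvable quotient $K/O^s(K)$.
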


\begin{lem}
    Let $ P_{K, \phi,b}$ be contained in a prime ideal $P$ of $\text{Д}^A(G)$. Then either  $P=P_{K, \phi,b}$ or there exists a maximal ideal $m$ of $\mathbb Z[w]$ such that $P = P_{K, \phi,b}^\mathfrak{m}$ .
\end{lem}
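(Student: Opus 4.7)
The plan is to mirror the argument from the preceding theorem on the spectrum, adapted to the case where $P$ is only prime rather than maximal. The strategy is to pass to the quotient by $P_{K,\phi,b}$ and perform the analysis inside a subring of $\mathbb{Z}[\omega]$, exploiting the one-dimensionality of $\mathbb{Z}[\omega]$ to conclude that the quotient prime is either zero or maximal.

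First, I would note that $S_{K,\phi,b}$ induces a ring isomorphism $\text{Д}^A(G)/P_{K,\phi,b} \cong \operatorname{Im}_{K,\phi,b}$, and that $\operatorname{Im}_{K,\phi,b}$ is a subring of $\mathbb{Z}[\omega]$ containing $\mathbb{Z}$ (the image of the identity). Let $\overline{P} := P/P_{K,\phi,b}$, which is a prime ideal of $\operatorname{Im}_{K,\phi,b}$. Since $\mathbb{Z}[\omega]$ is integral over $\mathbb{Z}$ and $\mathbb{Z} \subseteq \operatorname{Im}_{K,\phi,b} \subseteq \mathbb{Z}[\omega]$, transitivity of integrality shows $\mathbb{Z}[\omega]$ is integral over $\operatorname{Im}_{K,\phi,b}$. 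By Cohen--Seidenberg incomparability, $\dim \operatorname{Im}_{K,\phi,b} \le \dim \mathbb{Z}[\omega] = 1$, so every nonzero prime of $\operatorname{Im}_{K,\phi,b}$ is maximal; hence $\overline{P}$ is either zero or maximal.

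If $\overline{P} = 0$, then $P = P_{K,\phi,b}$. Otherwise $\overline{P}$ is maximal, and the lying-over theorem applied to the integral extension $\operatorname{Im}_{K,\phi,b} \subseteq \mathbb{Z}[\omega]$ produces a prime ideal $\mathfrak{m}$ of $\mathbb{Z}[\omega]$ with $\mathfrak{m} \cap \operatorname{Im}_{K,\phi,b} = \overline{P}$; by incomparability, $\mathfrak{m}$ is maximal. Pulling back through $S_{K,\phi,b}$ then gives $P = P_{K,\phi,b}^{\mathfrak{m}}$. The only nontrivial point, and the one I would check most carefully, is the verification that $\operatorname{Im}_{K,\phi,b}$ lies between $\mathbb{Z}$ and $\mathbb{Z}[\omega]$, which justifies the Cohen--Seidenberg machinery; this is immediate from the definition of the species map, whose values are sums of character values at elements of order dividing $n$ and hence lie in $\mathbb{Z}[\omega]$.
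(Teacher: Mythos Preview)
Your proof is correct and follows essentially the same route as the paper: pass to the quotient $\text{Д}^A(G)/P_{K,\phi,b}\cong \operatorname{Im}_{K,\phi,b}\subseteq \mathbb{Z}[\omega]$, then use the lying-over property of the integral extension $\operatorname{Im}_{K,\phi,b}\subseteq \mathbb{Z}[\omega]$ together with the one-dimensionality of $\mathbb{Z}[\omega]$ to produce the maximal ideal $\mathfrak{m}$. Your write-up is in fact more careful than the paper's, which invokes ``the Going-up Theorem'' without spelling out why the resulting prime of $\mathbb{Z}[\omega]$ is maximal; your explicit use of incomparability to bound $\dim \operatorname{Im}_{K,\phi,b}$ fills that gap cleanly.
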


\begin{proof}
    Assume that $P\neq P_{K, \phi,b}$. Then the  quotient  $P/P_{K,\phi,b}$ embeds as an ideal of $Im_{K.\phi,b}$ in $\mathbb Z[w]$. Since $Hom (K, A)$ is a finite set, By the
Going-up Theorem, there exists a maximal ideal $m$ of $\mathbb Z[w]$  that intersects $Im_{K.\phi,b}$ in this embedded copy of $P/P_{K,\phi,b}$. Thus,  $P=P_{K,\phi,b}^\mathfrak{m}$
\end{proof}
Now we can describe the connected components of the spectrum of $\text{Д}^A(G)$.
\begin{thm}
    Let $H$  be a perfect subgroup of $G$, that is, $H = O^s(H)$. For a given \(\phi \in \mathrm{Hom}(H,A)\), let \(X_{H,\phi}\) denote the set of all ideals of the forms $P_{K,\psi, b}$ and  $P_{K,\psi, b}^\mathfrak{m}$, such that  $O^s(K)={^gH}$ and $\psi={^g\phi}$ for some $g\in G$. Then $X_{H,\phi}$ is a connected component of the spectrum of $\text{Д}^A(G)$, and every  connected component is one of the $X_{H,\phi}$  with $H$ perfect. 
\end{thm}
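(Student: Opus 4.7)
The plan is to prove the theorem in two halves. First I would show that every ideal listed in $X_{H,\phi}$ lies in the single connected component of $\mathrm{Spec}(\text{Д}^A(G))$ containing $P_{H,\phi,1}$. Then I would argue that distinct $X_{H,\phi}$, as $(H,\phi)$ ranges over $G$-orbits of pairs with $H$ perfect, cannot be joined, so they in fact are the connected components and exhaust them. Note that every prime is automatically captured this way, since the paper's previous theorem classifies all primes as $P_{K,\psi,b}$ or $P_{K,\psi,b}^{\mathfrak{m}}$, and $O^s(K)$ is always perfect.

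For the first half I would chain together the lemmas of this section. Pick $P_{K,\psi,b}$ (or $P_{K,\psi,b}^{\mathfrak{m}}$) in $X_{H,\phi}$, so that $O^s(K) = {}^gH$ and $\psi|_{O^s(K)} = {}^g\phi$ for some $g \in G$. Lemma~\ref{b,1 espectro} reduces $b$ to $1$, placing it in the same component as $P_{K,\psi,1}$. Since $K/O^s(K)$ is solvable, I choose a refined normal chain $O^s(K) = K_0 \triangleleft K_1 \triangleleft \cdots \triangleleft K_n = K$ with each index prime; iterating Lemma~\ref{1 1 espctro} and its $\mathfrak{m}$-corollary pulls $P_{K,\psi,1}$ down to $P_{O^s(K),\psi|_{O^s(K)},1}$, which equals $P_{H,\phi,1}$ since species depend on the triple only up to $G$-conjugation.

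The harder half is showing the $X_{H,\phi}$ cannot be merged. The clean approach is to exhibit, for each such class, a primitive idempotent $e_{H,\phi} \in \text{Д}^A(G) \otimes \mathbb{Q}$ whose species values satisfy $S_{K,\psi,b}(e_{H,\phi}) = 1$ if $(O^s(K),\psi|_{O^s(K)}) =_G (H,\phi)$ and $0$ otherwise. Because the product of species maps embeds $\text{Д}^A(G)$ into a finite product of copies of $\mathbb{C}$ (the corollary at the end of Section~\ref{marks}), these are automatically orthogonal idempotents with sum $1$, and each $e_{H,\phi}$ carves off $X_{H,\phi}$ as a full clopen in the spectrum. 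Combined with the first half, this forces the $X_{H,\phi}$ to be precisely the connected components.

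The main obstacle is producing the idempotents $e_{H,\phi}$ inside $\text{Д}^A(G) \otimes \mathbb{Q}$. Concretely one must invert the diagonal block of the table of species indexed by triples $(K,\psi,b)$ lying in $X_{H,\phi}$, and check that the resulting rational combination of basis elements is defined over $\mathbb{Q}$ with a controllable conductor. The block-triangular structure of the species table, together with a Möbius-type inversion on the poset of pairs $(K,\psi)$ refined by the $O^s$ invariant, should make this explicit; this is precisely the computation the paper announces for Section~\ref{idempotents sec}, which I would invoke rather than redo in the present proof.
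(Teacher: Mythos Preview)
Your first half is correct and matches the paper: reduce $b$ to $1$ via Lemma~\ref{b,1 espectro}, then descend from $K$ to $O^s(K)$ along a prime-index normal series using Lemma~\ref{1 1 espctro}, landing everything in $X_{H,\phi}$ at the component of $P_{H,\phi,1}$.

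For the separation half, however, your idempotent route has a genuine gap, and the paper avoids it entirely with a much lighter argument. An idempotent in $\text{Д}^A(G)\otimes\mathbb{Q}$ cuts out a clopen in $\mathrm{Spec}(\text{Д}^A(G)\otimes\mathbb{Q})$, not in $\mathrm{Spec}(\text{Д}^A(G))$; for the latter you need the idempotent in $\text{Д}^A(G)$ itself, and ``defined over $\mathbb{Q}$ with a controllable conductor'' is not enough. The standard cautionary example is $R=\mathbb{Z}[x]/(x^2-1)$: the elements $(1\pm x)/2$ are idempotents of $R\otimes\mathbb{Q}$, yet $\mathrm{Spec}(R)$ is connected, since the minimal primes $(x-1)$ and $(x+1)$ both sit below $(2,x-1)$. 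So the forward reference to Section~\ref{idempotents sec} does not close the argument: those idempotents are built in $\mathbb{Q}[\omega]\text{Д}^A(G)$, and showing the relevant sums $e^s_{H,\phi}$ are actually integral is an extra step you have not supplied (and the paper's own treatment of that integrality in Section~\ref{conducotors} leans on the very lemmas establishing the component structure, so invoking it here risks circularity). The paper instead argues purely topologically: each $X_{H,\phi}$ equals $\bigcup V(P_{K,\psi,b})$ over the relevant $(K,\psi,b)$, because the only primes above $P_{K,\psi,b}$ are the $P_{K,\psi,b}^{\mathfrak{m}}$; hence $X_{H,\phi}$ is closed. The sets $X_{H,\phi}$ are pairwise disjoint and cover $\mathrm{Spec}(\text{Д}^A(G))$, and since there are only finitely many, each is open as well. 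That is the whole proof.
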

\begin{proof}
    Let $P_{K,\psi,1} \in X_{H,\phi}$. By the Corollary \ref{conect component}, we have $P_{K,\psi,1}$ and $P_{H,\phi,1}$ lie in the same connected component. Now by the Lemma \ref{b,1 espectro}, we have  $P_{K,\psi,b}$ and  $P_{K,\psi,b}^\mathfrak{m}$  belongs to the same connected component of  $P_{H,\phi,1}$. Therefore, $X_{H,\phi}$ is contained in a connected component of the spectrum 
of $\text{Д}^A(G)$.
Now we prove that $X_{H,\phi}$ is both closed and open. First, we have
\begin{align}
    X_{H,\phi} \subseteq 
    \bigcup_{(O^s(K),\psi)\,=_G\,(H,\phi)} V(P_{K,\psi,b}),
\end{align}
where 
$
V(P_{K,\psi,b}) = 
\{\, P \in \mathrm{Spec}(\text{Д}^A(G)) \mid P_{K,\psi,b} \leq P \,\}.
$
The reverse inclusion follows from the fact that the only prime ideals containing 
$P_{K,\psi,b}$ must be of the form $P_{K,\psi,b}^\mathfrak{m}$ for some maximal ideal $m$ of $\mathbb{Z}[w]$.  Hence, $V(P_{K,\psi,b})$ is a subset of $X_{H,\phi}$, and thus
\begin{align}
    X_{H,\phi} = 
    \bigcup_{(O^s(K),\psi)\,=_G\,(H,\phi)} V(P_{K,\psi,b}).
\end{align}
Since the sets $V(P_{K,\psi,b})$ are closed, it follows that $X_{H,\phi}$ is closed. 
Moreover, the sets $X_{H,\phi}$ are either equal or disjoint, so that
\begin{align*}
    \mathrm{Spec}(\text{Д}^A(G)) = \bigsqcup X_{H,\phi}.
\end{align*}
Because the number of such sets is finite, each $X_{H,\phi}$ is open. 
Therefore, $\mathrm{Spec}(\text{Д}^A(G))$ is a finite disjoint union of these sets.

\end{proof}

  \begin{cor}
Let $(K,\phi,a)$ and $(L,\alpha,c)$ be elements of $\mathcal{T}(G)$, and let $\mathfrak{m}$ and $\mathfrak{n}$ be maximal ideals of $\mathbb{Z}[\omega]$. 
Then the ideals $P_{K,\phi,a}^{\mathfrak{m}}$ and $P_{L,\alpha,c}^{\mathfrak{n}}$ lie in the same connected component of $\mathrm{Spec}(\text{Д}^A(G))$ if and only if $  (O^s(K),\phi) =_G (O^s(L),\alpha)$.
\end{cor}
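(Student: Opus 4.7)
The plan is to deduce this directly from the preceding theorem, which identifies $\mathrm{Spec}(\text{Д}^A(G))$ as the disjoint union of the sets $X_{H,\phi}$ indexed by $G$-conjugacy classes of pairs $(H,\phi)$ with $H$ perfect in the sense $H = O^s(H)$. Once every ideal of the form $P_{K,\psi,b}^{\mathfrak{m}}$ has been placed inside one such $X_{H,\phi}$, the statement reduces to pure bookkeeping about when two indexing data yield the same component.

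The first thing I would carry out is a short verification that $O^s(K)$ is always perfect, so that it is a legitimate index for a connected component. Setting $N = O^s(K)$, both $N/O^s(N)$ and $K/N$ are solvable, so the extension $1 \to N/O^s(N) \to K/O^s(N) \to K/N \to 1$ shows $K/O^s(N)$ is solvable; the minimality clause in the definition of $O^s(K)$ then forces $O^s(N) \supseteq N$, hence $O^s(N) = N$. With this in hand, taking $g = 1$ in the definition of $X_{H,\phi}$ places $P_{K,\phi,a}^{\mathfrak{m}}$ in $X_{O^s(K),\,\phi|_{O^s(K)}}$, and symmetrically $P_{L,\alpha,c}^{\mathfrak{n}}$ lies in $X_{O^s(L),\,\alpha|_{O^s(L)}}$.

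To conclude, I would invoke the disjointness in the preceding theorem: the two ideals share a connected component if and only if the two indexing perfect pairs coincide, namely $(O^s(K),\,\phi|_{O^s(K)}) =_G (O^s(L),\,\alpha|_{O^s(L)})$, which is exactly the stated criterion under the paper's convention of writing $\phi$ both for the original character on $K$ and for its restriction to $O^s(K)$. The only mild obstacle is matching these notational conventions with the condition $\psi = {}^g\phi$ appearing in the definition of $X_{H,\phi}$ across two \emph{a priori} different domains; no further computation is required beyond the perfectness of $O^s(K)$ and the classification already in hand.
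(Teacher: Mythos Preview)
Your proposal is correct and matches the paper's intent: the corollary is stated without proof precisely because it is an immediate read-off of the preceding theorem's decomposition $\mathrm{Spec}(\text{Д}^A(G)) = \bigsqcup X_{H,\phi}$, and your argument spells out exactly that deduction, including the routine check that $O^s(K)$ is perfect and the handling of the restriction convention for $\phi$.
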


\section{Idempotent of $\mathbb{Q}[\omega]\text{Д}^A(G)$ } \label{idempotents sec}
\begin{defi}
Let     $ (K,\psi) \in \mathcal{M}^A(G)$  and $a\in K$. We define

    \begin{align*}
        e_{K, \psi, a} :=  \frac{1}{|N_G(K,\psi)\cap C_G(a)|}\sum_{\substack{ (\langle a \rangle, \psi) \leq (H, \psi) \leq (K,\psi)\\ S\in Irr(H)}} \mu ((H,\psi),(K,\psi)) \overline{ \chi_S(a)}[H, \psi, S],
    \end{align*}
    where $\mu$ is the Möbius function of the poset $\mathcal{M}^A(G)$. Here, $e_{K, \psi, a} $ is an element of  $\mathbb{Q}[\omega]\text{Д}^A(G)$. 
\end{defi}

 \begin{thm}
Let $(L,\psi), (K,\phi) \in \mathcal{M}^A(G)$, with $a \in L$ and $b \in K$. Then
\begin{align*}
    S_{L, \psi, a}(e_{K, \phi , b}) = 
    \begin{cases}
        1 & \text{if } (L, \psi, a) =_G (K, \phi, b) \\
        0 & \text{otherwise.}
    \end{cases}
\end{align*}
\end{thm}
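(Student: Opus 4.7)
The plan is to unfold the definition of $e_{K,\phi,b}$, apply column orthogonality of the character table of $H$ to the inner sum over $S \in \mathrm{Irr}(H)$, and then use Möbius inversion on an appropriate interval of $\mathcal{M}^A(G)$. Using the formula $S_{L,\psi,a}([H,\phi,S]_G) = \sum_{x \in [G/H],\ (L,\psi) \leq {}^x(H,\phi)} \chi_S(x^{-1}ax)$ and swapping the order of summation, column orthogonality collapses $\sum_{S \in \mathrm{Irr}(H)} \overline{\chi_S(b)}\,\chi_S(x^{-1}ax)$ to $|C_H(b)|$ when $x^{-1}ax$ is $H$-conjugate to $b$ and to $0$ otherwise. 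Reindexing via $y = xh$ for $h \in H$ absorbs the factor $|C_H(b)|$ into the counting and rewrites the whole expression as
\begin{align*}
S_{L,\psi,a}(e_{K,\phi,b}) = \frac{1}{|N_G(K,\phi) \cap C_G(b)|} \sum_{\langle b \rangle \leq H \leq K} \mu((H,\phi|_H),(K,\phi))\; M(H),
\end{align*}
where $M(H) = |\{y \in G : y b y^{-1} = a \text{ and } (L,\psi) \leq {}^y(H,\phi)\}|$.

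If $a$ and $b$ are not $G$-conjugate then $M(H)=0$ for every $H$ and the species vanishes. Otherwise, using $G$-conjugation invariance of $S_{L,\psi,a}$, I would replace $(L,\psi,a)$ by a $G$-conjugate so that $a = b$; then $y$ ranges over $C_G(a)$. Setting $\mathcal{C} := \{c \in C_G(a) : (L,\psi) \leq ({}^cK,{}^c\phi)\}$, the compatibility $\psi = {}^c\phi|_L$ persists on every intermediate $H$ whenever $c \in \mathcal{C}$, so $M(H) = |\{c \in \mathcal{C} : c^{-1}Lc \leq H\}|$. Exchanging the sums over $H$ and $c$, and noting that the interval $[(\langle a \rangle,\phi|_{\langle a\rangle}),(K,\phi)]$ in $\mathcal{M}^A(G)$ is order-isomorphic to the subgroup interval $[\langle a \rangle, K]$, the defining Möbius identity gives
\begin{align*}
\sum_{\langle a,\,c^{-1}Lc\rangle \leq H \leq K} \mu((H,\phi|_H),(K,\phi)) = \delta_{\langle a,\,c^{-1}Lc\rangle,\, K}.
\end{align*}

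Because $a \in L$ and $c \in C_G(a)$, one has $a = c^{-1}ac \in c^{-1}Lc$, so $\langle a,c^{-1}Lc\rangle = c^{-1}Lc$ and the delta equals $1$ exactly when $c^{-1}Lc = K$. Any such $c$ realizes $(L,\psi) = {}^c(K,\phi)$ with ${}^ca = a$, i.e.\ $(L,\psi,a) =_G (K,\phi,b)$; contrapositively, when $(L,\psi,a) \neq_G (K,\phi,b)$ no $c \in \mathcal{C}$ satisfies $c^{-1}Lc = K$ and the sum vanishes. In the remaining case I would reduce further to $(L,\psi) = (K,\phi)$ and $a = b$; then $\mathcal{C} = N_G(K,\phi) \cap C_G(a)$ and every $c \in \mathcal{C}$ contributes $1$, which exactly cancels the normalizing prefactor. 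The main obstacle is the bookkeeping of the fibering characters under restriction and conjugation — in particular verifying that for $c \in \mathcal{C}$ the $\psi$-compatibility is automatic on every intermediate $H$, and that the Möbius function on the interval in $\mathcal{M}^A(G)$ coincides with the subgroup Möbius function — but no tool beyond column orthogonality of characters and subgroup-lattice Möbius inversion is required.
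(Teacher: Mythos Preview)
Your proposal is correct and follows essentially the same route as the paper's proof: expand the definition of $e_{K,\phi,b}$, apply column orthogonality for $\sum_{S\in\mathrm{Irr}(H)}\overline{\chi_S(b)}\chi_S(\cdot)$, reindex so the remaining sum is over $c\in C_G(a)$, and then apply the defining M\"obius identity on the interval below $(K,\phi)$. Your reindexing via $y=xh$ and your observation that $a\in c^{-1}Lc$ (since $a\in L$ and $c\in C_G(a)$) make the bookkeeping slightly cleaner than the paper's version, which carries the factor $|C_H(a)|/|H|$ through an $H$-orbit computation before reaching the same $\zeta\cdot\mu$ collapse; but the two arguments are the same in substance, and the two points you flag as obstacles (that the fibering character is automatic on intermediate $H$, and that the M\"obius function on the interval $[(\langle a\rangle,\phi),(K,\phi)]$ agrees with the subgroup M\"obius function on $[\langle a\rangle,K]$) are exactly what the paper uses implicitly.
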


\begin{proof} Let
\begin{align*}
    m_{K,\phi,b}  =\frac{1}{|N_G(K,\phi)\cap C_G(b)|}.
\end{align*}
Then
\begin{align*}
S_{L, \psi, a}(e_{K, \phi , b})
&= m_{K, \phi, b} \cdot 
\sum_{\substack{(H, \alpha)\in \mathcal{M}^A(G)\\ (\langle a \rangle ,\phi)\leq (H,\alpha) \leq (K,\phi)\\ S\in \mathrm{Irr}(H)\\ x\in [G/H]\\ (L,\psi) \leq {^x(H,\alpha)}}} 
\mu((H,\alpha),(K,\phi)) \, \overline{\chi_S(b)} \, \chi_S({^x a})\\
&= m_{K, \phi, b} \cdot 
\sum_{\substack{(H, \alpha)\in \mathcal{M}^A(G)\\ (\langle a \rangle ,\phi)\leq (H,\alpha) \leq (K,\phi)}} 
\mu((H,\alpha),(K,\phi)) 
\sum_{\substack{ x\in [G/H]\\ (L,\psi) \leq {^x(H,\alpha)}}} 
\sum_{S\in \mathrm{Irr}(H)} \overline{\chi_S(b)} \, \chi_S({^x a}).
\end{align*}
If $a \neq_G b$, then $S_{L, \psi, a}(e_{K, \phi , b}) = 0$. 
Without loss of generality, assume $a = b$. 
If $(L,\psi) \lneq_G (K,\phi)$, then also $S_{L, \psi, a}(e_{K, \phi , b}) = 0$. Now, assume that $(L,\psi) \leq_G (K,\phi)$. We compute
\begin{align*}
    S_{L, \psi, a}(  e_{K, \phi , b})&=  m_{K, \phi, b} \cdot \sum_{\substack{(H, \alpha)\in \mathcal{M}^A(G)\\ (\langle a \rangle ,\phi)\leq (H,\alpha) \leq (K,\phi)}} \mu ((H,\alpha),(K,\phi)) \frac{1}{|H|} \sum_{\substack{ x\in G \\ a=_H {^xa}\\ (L,\psi) \leq ^x(H,\alpha) }}  \sum_{S\in Irr(H)}  \overline{ \chi_S(a)} \chi_S(^xa)\\
&=  m_{K, \phi, b} \cdot \sum_{\substack{(H, \alpha)\in \mathcal{M}^A(G)\\ (\langle a \rangle ,\phi)\leq (H,\alpha) \leq (K,\phi)}} \mu ((H,\alpha),(K,\phi)) \frac{1}{|H|} \sum_{\substack{ x\in G \\ a=_H {^xa}\\ (L,\psi) \leq ^x(H,\alpha) }}  |C_H(a)|\\
&=  m_{K, \phi, b} \cdot \sum_{\substack{(H, \alpha)\in \mathcal{M}^A(G)\\ (\langle a \rangle ,\phi)\leq (H,\alpha) \leq (K,\phi)}}\frac{|C_H(a)|}{|H|} \sum_{\substack{ x\in G \\ a=_H {^xa} }} \zeta((L,\psi)^x, (H, \alpha)) \cdot\mu ((H,\alpha),(K,\phi)).
\end{align*}
Let $\{a_1, \dots, a_r\}$ be the orbit of $a$ under conjugation by $H$, with $r = |H|/|C_H(a)|$ and $a_i = a^{h_i}$. Then the previous sum becomes

\begin{align*}
   S_{L, \psi, a}(  e_{K, \phi , b})&=  m_{K, \phi, b} \cdot \sum_{\substack{ (H,\alpha  ) \in \mathcal{M}^A(G)}}\frac{|C_H(a)|}{|H|} \sum_{i=1}^r\sum_{\substack{ x\in G \\ ^xa= a_i }} \zeta((L,\psi)^x, (H, \alpha)) \cdot\mu ((H,\alpha),(K,\phi)).
\end{align*}
Note that $a^x = a_i = a^{h_i}$  if and only if $a^{xh_i} = a$, and if $L^x \leq H$, then $L^{xh_i} \leq H$, so
the change of variable $x = x^{h_i}$  and the fact that $r = |H|/|C_H (a)| $  yield
\begin{align*}
  S_{L, \psi, a}(  e_{K, \phi , b})&=  m_{K, \phi, b} \cdot \sum_{\substack{ (H,\alpha  ) \in \mathcal{M}^A(G)}} \sum_{\substack{ x\in G \\ ^xa= a }} \zeta((L,\psi)^x, (H, \alpha)) \cdot\mu ((H,\alpha),(K,\phi))\\
  &=   m_{K, \phi, b} \cdot \sum_{\substack{ x\in C_G(a) }} \sum_{\substack{ (H,\alpha  ) \in \mathcal{M}^A(G)}}  \zeta((L,\psi)^x, (H, \alpha)) \cdot\mu ((H,\alpha),(K,\phi))\\
  &=0, 
\end{align*}
y the Möbius inversion formula, this sum is $0$ if there is no $x \in C_G(a)$ such that ${^x(L,\psi)} = (K,\phi)$.  
Finally, if $(L,\psi,  a) = (K, \alpha, b)$, then  
\begin{align*}
 S_{L, \psi, a}(  e_{K, \phi , b})&=   m_{K, \phi, b} \cdot \sum_{\substack{ x\in C_G(a) }} \sum_{\substack{ (H,\alpha  ) \in \mathcal{M}^A(G)}}  \zeta((L,\psi)^x, (H, \alpha)) \cdot\mu ((H,\alpha),(K,\phi))\\
 &=   m_{K, \phi, b} \cdot \frac{1}{|N_G(L,\psi)\cap C_G(a)|}\\
 &=1.
\end{align*}
\end{proof}
Then,  the set of elements  $\{e_{K,\phi, b}\mid (K,\phi, b) \in [\mathcal{T}(G)]\}$ are  idempotent orthogonal primitive of $\mathbb{Q}[\omega]\text{Д}^A(G)$.

\section{Conductors of idempotents } \label{conducotors}

\begin{rem}
    Let $ (K,\psi,a) \in \mathcal{T}(G) $. Then
$$ N_G(K,\psi,a) = \{\, g \in G \mid ({}^g K, {}^g \psi, {}^g a) = (K,\psi,a) \,\} = N_G(K,\psi) \cap C_G(a). $$ 
\end{rem}
\begin{prop}\label{S cong 0}
Let $\omega$ denote a primitive $|G|$-th root of unity, and let 
$$
\text{Д}^A_\omega(G) := \mathbb{Z}[\omega] \otimes \text{Д}^A(G)
$$ 
be the ring obtained from $\text{Д}^A(G)$ by extending scalars to $\mathbb{Z}[\omega]$. Consider the map
$$
\prod_{[H,\phi,a] \in [\mathcal{T}(G)]} 
S_{H,\phi,b} \colon 
\text{Д}^A_\omega(G)
\longrightarrow 
\prod_{[H,\phi,a] \in [\mathcal{T}(G)]} \mathbb{Z}[\omega].
$$

Let $\alpha \in \text{Д}^A_\omega(G)$. Then, for all  $[H, \phi, a]\in [\mathcal{T}(G)]$, we have
$$
\sum_{g \in N_G(H,\phi, a)} S_{\langle g, H \rangle, \phi, g}(\alpha) \equiv 0 \pmod{|N_G(H,\phi, a)|},
$$
where $\langle g,H \rangle$ denotes the subgroup of $G$ generated by $g$ and $H$.
\end{prop}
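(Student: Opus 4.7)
The plan is to reduce to basis elements and then carry out an orbit-counting argument modeled on the classical Dress--Burnside congruences.

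First, by $\mathbb{Z}[\omega]$-linearity of each species, we reduce to verifying the congruence for $\alpha = [K,\psi,S]_G$ with $(K,\psi) \in [\mathcal{M}^A(G)]$ and $S$ a simple $\mathbb{C}K$-module. Using the formula for the species on a basis element recalled in Section~\ref{marks},
\begin{align*}
\Sigma := \sum_{g \in N} S_{\langle g,H\rangle,\phi,g}(\alpha) = \sum_{g \in N}\ \sum_{\substack{x \in [G/K]\\ (\langle g,H\rangle,\phi) \leq {}^x(K,\psi)}} \chi_S({}^{x^{-1}}g),
\end{align*}
where $N := N_G(H,\phi,a)$. Note that the condition $(\langle g,H\rangle,\phi) \leq {}^x(K,\psi)$ not only requires $\langle g,H\rangle \leq xKx^{-1}$, but also fixes the extension of $\phi$ to $\langle g,H\rangle$ to coincide with the restriction of ${}^x\psi$.

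Next, I will swap the order of summation, taking $x$ as the outer variable ranging over $T := \{x \in [G/K] : (H,\phi) \leq {}^x(K,\psi)\}$. For each such $x$ and each $g \in N \cap xKx^{-1}$ the species condition determines a unique extension of $\phi$ to $\langle g, H\rangle$ (namely the restriction of ${}^x\psi$), so the inner sum simplifies, after the substitution $k = x^{-1}gx$, to
\begin{align*}
\sum_{g \in N \cap xKx^{-1}} \chi_S({}^{x^{-1}}g) = \sum_{k \in M_x} \chi_S(k) = |M_x| \cdot \dim_{\mathbb{C}}(S^{M_x}),
\end{align*}
where $M_x := x^{-1}(N \cap xKx^{-1}) x \leq K$ and the last equality is the standard identity $\sum_{k \in L} \chi(k) = |L| \langle \chi, 1 \rangle_L$ for a subgroup $L$ acting on $S$.

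Finally, I decompose the sum over $x \in T$ according to the action of $N$. Since $N$ normalizes $(H,\phi)$, it acts on $T$ by left multiplication; the stabilizer of $x_0 \in T$ is $N \cap x_0 K x_0^{-1}$, and $M_x$ is constant on each $N$-orbit with $|M_{x_0}| = |N \cap x_0 K x_0^{-1}|$. Hence the contribution of the $N$-orbit through $x_0$ is exactly
\begin{align*}
[N : N \cap x_0 K x_0^{-1}] \cdot |N \cap x_0 K x_0^{-1}| \cdot \dim_{\mathbb{C}}(S^{M_{x_0}}) = |N| \cdot \dim_{\mathbb{C}}(S^{M_{x_0}}),
\end{align*}
which is manifestly a $\mathbb{Z}[\omega]$-multiple of $|N|$. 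Summing over orbits yields $\Sigma \in |N| \cdot \mathbb{Z}_{\geq 0}$. The key step, and the main subtlety I expect, is the correct handling of the implicit extensions of $\phi$ when swapping the summation order: each pair $(g,x)$ with $g \in N \cap xKx^{-1}$ must contribute exactly once, with the $\phi$-compatibility condition absorbed into the forced choice of extension matching ${}^x\psi$; once this bookkeeping is in place the orbit-counting collapses cleanly to the desired multiple of $|N|$.
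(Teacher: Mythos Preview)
Your proof is correct and follows essentially the same approach as the paper's: reduce to basis elements, swap the order of summation, apply the identity $\sum_{g\in L}\chi_S(g)=|L|\dim S^{L}$, and then group the outer sum into $N$-orbits so that each orbit contributes a multiple of $|N|$. The only cosmetic difference is that the paper carries out the computation on a general object $[X,V]_G$ (with $X^H$ and $N_x$ in place of your $T$ and $N\cap xKx^{-1}$), whereas you specialize immediately to $[K,\psi,S]_G$; your explicit remark about how the extension of $\phi$ to $\langle g,H\rangle$ is forced by ${}^x\psi$ makes precise a point the paper leaves implicit.
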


\begin{proof}
It suffices to consider the case where $\alpha$
 is a basis element of the form $[X,V]_G$. 
Let $N := N_G(H,\phi,a)$. Then
    \begin{align*}
        \sum_{g \in N} S_{\langle g, H \rangle,\phi, g}([X,V])&=\sum_{g \in N} \sum_{\substack{x\in X\\  (\langle g,H\rangle ,\phi) \leq (G_x,\phi_x) } }\chi_{V_x}(g)\\
        &=\sum_{x\in X^H}\sum_{g\in N_x} \chi_{V_x} (g) =\sum_{x\in X^H} |N_x|\text{dim} V_x^{N_x}
    \end{align*}
where $N_x := \{\, g \in N : gx = x \,\}$. Now let $n \in N$ and $x \in X^H$. For any $h \in H$, we have  $hnx=n(n^{-1}hn)x=nh^\prime x=nx$, for some $h^\prime \in H$, since $N$ normalizes $H$. Hence $nx \in X^H$, and therefore $N$ acts on $X^H$.
Grouping the terms in the above sum according to the $N$-orbits in $X^H$, we obtain
\begin{align*}
   \sum_{g \in N} S_{\langle g, H \rangle,\phi, g}([X,V])=  |N| \sum_{x\in [N/ X^H]} dim V_x^{N_x},
\end{align*}
which is divisible by $|N|$, as required.
\end{proof}
\begin{thm}
Let $[K,\psi,a]\in[\mathcal{T}(G)]$.
Consider the primitive idempotent $e_{K,\psi,a}$ in 
$ 
\mathbb{Q}\text{Д}^A_\omega(G),
$
corresponding to the class $[K,\psi,a]$.
Then the conductor of $e_{K,\psi,a}$ into the global fibered representation ring  $\text{Д}^A_\omega(G)$ is $|N_G(H,\psi,a)|$,
where $N_G(K,\psi,a)$ denotes the stabilizer of the triple $(K,\psi,a)$ in $G$.
\end{thm}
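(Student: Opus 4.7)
The plan is to establish both divisibilities: first that $|N_G(K,\psi,a)|\cdot e_{K,\psi,a}\in\text{Д}^A_\omega(G)$, and second that every positive integer $n$ with $n\cdot e_{K,\psi,a}\in\text{Д}^A_\omega(G)$ is divisible by $|N_G(K,\psi,a)|$. The first direction is a direct reading of the explicit formula for the idempotent; the second, which is the substantive part, I would derive by feeding the hypothesis into Proposition~\ref{S cong 0} applied at a carefully chosen triple of $\mathcal{T}(G)$, namely $(K,\psi,1)$ rather than the obvious $(K,\psi,a)$.

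For the upper bound, I would start from
\begin{align*}
e_{K,\psi,a}=\frac{1}{|N_G(K,\psi)\cap C_G(a)|}\sum_{\substack{(\langle a\rangle,\psi)\leq(H,\psi)\leq(K,\psi)\\S\in\mathrm{Irr}(H)}}\mu((H,\psi),(K,\psi))\,\overline{\chi_S(a)}\,[H,\psi,S],
\end{align*}
use the identity $N_G(K,\psi)\cap C_G(a)=N_G(K,\psi,a)$ to see that multiplying through by this integer clears the denominator, and note that what remains is a $\mathbb{Z}[\omega]$-linear combination of basis elements of $\text{Д}^A(G)$, since Möbius values are integers and character values of $\mathbb{C}H$-modules at elements of $H$ lie in $\mathbb{Z}[\zeta_{|H|}]\subseteq\mathbb{Z}[\omega]$.

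For the lower bound, I assume $n\,e_{K,\psi,a}\in\text{Д}^A_\omega(G)$ and apply Proposition~\ref{S cong 0} to the triple $(K,\psi,1)\in\mathcal{T}(G)$. Because $N_G(K,\psi,1)=N_G(K,\psi)$, the congruence becomes
\begin{align*}
\sum_{g\in N_G(K,\psi)} S_{\langle g,K\rangle,\psi,g}(n\,e_{K,\psi,a})\equiv 0\pmod{|N_G(K,\psi)|}.
\end{align*}
Using the orthogonality $S_{L,\chi,c}(e_{K,\psi,a})=\delta_{(L,\chi,c),(K,\psi,a)}$, I would argue that a summand is nonzero precisely when $\langle g,K\rangle$ is $G$-conjugate to $K$---which forces $g\in K$ since $K\subseteq\langle g,K\rangle$---and $g$ is $N_G(K,\psi)$-conjugate to $a$. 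Such $g$ form exactly the $N_G(K,\psi)$-orbit of $a$, which lies inside $K$ (as $N_G(K,\psi)$ normalises $K$) and has cardinality $|N_G(K,\psi)|/|N_G(K,\psi,a)|$. The congruence then collapses to $n\cdot|N_G(K,\psi)|/|N_G(K,\psi,a)|\equiv 0\pmod{|N_G(K,\psi)|}$, i.e.\ $|N_G(K,\psi,a)|\mid n$.

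The main subtlety I anticipate is exactly the choice of triple in Proposition~\ref{S cong 0}: the obvious choice $(K,\psi,a)$ yields only the weaker conclusion that $n\cdot|C_K(a)\cap[a]_{N_G(K,\psi)}|$ is divisible by $|N_G(K,\psi,a)|$, which can fall short whenever $a$ has non-trivial conjugates lying in $C_K(a)$ (for instance with $G=D_8$ and $a$ a rotation of order four, where this intersection has size two instead of one). Replacing $a$ by the identity expands the summation range from $N_G(K,\psi,a)$ to $N_G(K,\psi)$, and the length of the conjugacy orbit of $a$ supplies precisely the factor that was previously missing. Once this choice is made, the remainder is routine orbit counting together with the explicit evaluation of the species on an idempotent.
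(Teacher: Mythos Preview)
Your proof is correct and essentially identical to the paper's: both read the upper bound directly off the explicit formula for $e_{K,\psi,a}$, and both obtain the lower bound by applying Proposition~\ref{S cong 0} at the triple $(K,\psi,1)$ and then counting the $N_G(K,\psi)$-orbit of $a$ to reduce the congruence to $|N_G(K,\psi,a)|\mid n$. Your closing paragraph explaining why the na\"ive choice $(K,\psi,a)$ would be insufficient is a helpful clarification that the paper itself does not include.
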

\begin{proof}

Let $ n $ denote the conductor of $ e_{K,\psi,a} $. Recall that the primitive idempotent $ e_{K,\psi,a} $ is given by the formula,
 
\begin{align*}
    e_{K, \psi, a} =  \frac{1}{|N_G(K,\psi)\cap C_G(a)|}\sum_{\substack{ (\langle a \rangle, \psi) \leq (H, \psi) \leq (K,\psi)\\ S\in Irr(H)}} \mu ((H,\psi),(K,\psi)) \overline{ \chi_S(a)}[H, \psi, S],
\end{align*}
From this formula we immediately see that  $n \leq |N_G (K, \psi, a)|$. Now, applying Proposition   \ref{S cong 0} to the class $[K,\psi,1]$, and since $N_G(K,\psi,1)=N_G(K,\psi)$, we obtain 
\begin{align*}
    m|N_G(K,\psi)|=&\sum_{g\in N_G(K,\psi)} S_{\langle g,K \rangle,\psi,g} (n\cdot e_{K,\psi, a}) =n\cdot|\{ g\in K \mid g=_{N_G(K,\psi)} a \} |\\
    =&n\cdot|N_G(K,\psi)\cdot a|=n \bigg|\frac{N_G(K,\psi)}{C_{N_G(K,\psi)} (a)} \bigg|=\frac{|N_G(K,\psi|)}{|N_G(K,\psi,a)|}.
\end{align*}
Equivalently, we can write $n = m\,|N_G(K,\psi,a)|$, that is, 
$|N_G(K,\psi,a)| \leq n$. This yields the desired equality for the conductor.
\end{proof}

\begin{defi}
    Let $[H,\phi]\in \mathcal{M}^A(G)$, we define 
    \begin{align*}
        e^s_{H,\phi} =\sum_{{\substack{(K,\psi, a ) \in [\mathcal{T}(G)] \\ (O(K), \psi)=_G( H, \phi) \\ a\in [K/G] }}} e_{K,\psi,  a}.
    \end{align*}
    
    \end{defi}
 \begin{prop}
    The elements $e^s_{H,\phi}$, for $[H,\phi]_G \in [\mathcal{M}^A(G)]$, 
are primitive, orthogonal idempotents of $\text{Д}^A(G)$.
 \end{prop}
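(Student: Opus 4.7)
The plan is to recognize each $e^s_{H,\phi}$ as the characteristic idempotent of the connected component $X_{H,\phi}$ of $\mathrm{Spec}(\text{Д}^A(G))$. Idempotency and orthogonality in $\mathbb{Q}[\omega]\text{Д}^A(G)$ are formal: by the results of Section~\ref{idempotents sec}, $\{e_{K,\psi,a} : (K,\psi,a) \in [\mathcal{T}(G)]\}$ is a set of orthogonal primitive idempotents, and since the indexing condition $(O^s(K),\psi) =_G (H,\phi)$ partitions $[\mathcal{T}(G)]$ according to the $G$-class of $(O^s(K),\psi)$, the sums defining $e^s_{H,\phi}$ and $e^s_{H',\phi'}$ have disjoint index sets for $[H,\phi]_G \neq [H',\phi']_G$. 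Hence $(e^s_{H,\phi})^2 = e^s_{H,\phi}$ and $e^s_{H,\phi} \cdot e^s_{H',\phi'} = 0$ automatically.

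The substantive assertions are that $e^s_{H,\phi}$ actually lies in the subring $\text{Д}^A(G)$ and that it is primitive there. For this I would invoke the classification from Section~\ref{prime ideals}: the connected components of $\mathrm{Spec}(\text{Д}^A(G))$ are precisely the finitely many sets $X_{H,\phi}$ indexed by $G$-classes of pairs $(H,\phi)$ with $H$ perfect. A finite clopen partition of the spectrum yields a corresponding decomposition of the ring as a finite product of indecomposable factors, producing a unique primitive idempotent $f_{H,\phi} \in \text{Д}^A(G)$ per component, characterized by $S_{L,\chi,b}(f_{H,\phi}) = 1$ if $P_{L,\chi,b} \in X_{H,\phi}$ and $0$ otherwise.

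It then suffices to verify that $e^s_{H,\phi}$ has the same species values as $f_{H,\phi}$ and appeal to injectivity of the total species map on $\mathbb{Q}[\omega]\text{Д}^A(G)$. Using the orthogonality theorem for the $e_{K,\psi,a}$,
\[
S_{L,\chi,b}(e^s_{H,\phi}) = \sum_{\substack{(K,\psi,a)\in [\mathcal{T}(G)]\\ (O^s(K),\psi) =_G (H,\phi)}} S_{L,\chi,b}(e_{K,\psi,a})
\]
equals $1$ precisely when $(L,\chi,b) =_G (K,\psi,a)$ for some term of the sum, i.e.\ when $(O^s(L),\chi) =_G (H,\phi)$, which by the characterization of $X_{H,\phi}$ is exactly the condition $P_{L,\chi,b} \in X_{H,\phi}$. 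Thus $e^s_{H,\phi} = f_{H,\phi}$, so $e^s_{H,\phi}\in \text{Д}^A(G)$ and is primitive there.

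The main obstacle is the transition from $\mathbb{Q}[\omega]\text{Д}^A(G)$ back down to $\text{Д}^A(G)$: a priori the coefficients in the definition of $e_{K,\psi,a}$ involve $|N_G(K,\psi)\cap C_G(a)|^{-1}$ and roots of unity, so only after summing over a full $G$-orbit of components does the result become integral. The connected-component classification of the spectrum supplies exactly the right grouping, and that is the key ingredient of the argument; once that is accepted, the species-value comparison is immediate.
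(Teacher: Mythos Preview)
Your argument is correct and follows essentially the same route as the paper's: both rest on the connected-component description of $\mathrm{Spec}(\text{Д}^A(G))$ established in Section~\ref{prime ideals}, and both identify $e^s_{H,\phi}$ as the idempotent attached to the component $X_{H,\phi}$. The only difference is packaging: the paper shows that the species support $\mathbb{E}_e$ of an arbitrary idempotent $e\in\text{Д}_\omega^A(G)$ is a union of the blocks indexed by perfect $(H,\phi)$ (via Lemmas~\ref{b,1 espectro} and~\ref{1 1 espctro}) and hence that every idempotent is a sum of $e^s_{H,\phi}$'s, whereas you go in the other direction and match $e^s_{H,\phi}$ with the primitive idempotent $f_{H,\phi}$ furnished by the clopen decomposition---your phrasing makes the integrality of $e^s_{H,\phi}$ more explicit than the paper's.
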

 \begin{proof} By definition, the elements  $e_{H,\phi}^s$  are  orthogonal idempotent.   Let $e\in \text{Д}_\omega^A(G)$.  Then $e$ is a sum of the primitive idempotents   of $\mathbb{Q}[\omega]\text{Д}^A(G)$.  Since each $S_{H,\phi,a}$ is a ring homomorphism, we have, $S_{H,\phi,a}(e)$ is either $0$ or $1$.  Define 
     \begin{align*}
         \mathbb{E}_e:= \lbrace (H,\phi, a) \in [\mathcal{T}(G)] \mid S_{H,\phi,a}(e)=1 \rbrace. 
     \end{align*}
Then we can write 
\begin{align*}
    e=\sum_{(H,\phi,a) \in \mathbb{E}_e} e_{H,\phi,a}.
\end{align*}
By  Lemma \ref{b,1 espectro} and Lemma \ref{1 1 espctro},   if $(H,\phi, a) \in \mathbb{E}_e$ and there exists  $(K,\psi,b)\in [\mathcal{T}(G)]$  such that $(O(K), \psi)=_G(H, \phi)$,   then $(K,\psi,b) \in \mathbb{E}_e$. Thus, 
\begin{align*}
    e=\sum_{(H,\phi) \in [\mathbb{E}_e]} e^s_{H,\phi}
\end{align*}
where $[\mathbb{E}_e]$ denotes the set of pairs $(H,\phi)$ such that 
$H$ is a perfect subgroup of $G$ and there exists $a \in H$ with 
$(H,\phi,a) \in \mathbb{E}_e$.
 \end{proof}
\begin{cor}
There exists a non-trivial primitive idempotent in $\text{Д}_\omega^A(G)$ 
if and only if $G$ is not solvable.
\end{cor}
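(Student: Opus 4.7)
The plan is to use the previous proposition in a fundamental way: it established that every idempotent of $\text{Д}_\omega^A(G)$ is a sum of the orthogonal primitive idempotents $e^s_{H,\phi}$, so in particular $1 = \sum_{[H,\phi]} e^s_{H,\phi}$ where the sum ranges over $G$-classes of pairs $(H,\phi) \in \mathcal{M}^A(G)$ with $H$ perfect. Thus the existence of a non-trivial primitive idempotent is equivalent to the indexing set containing at least two distinct classes.

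First I would handle the direction where $G$ is solvable. Every subgroup $K \leq G$ is then solvable, so by the definition of the solvable residual $O^s(K) = 1$ for every $K \leq G$. Consequently the only perfect subgroup $H$ of $G$ is the trivial one, and since $\operatorname{Hom}(1,A)$ consists only of the trivial homomorphism, there is a unique class $[1,1]_G$. Hence $e^s_{1,1} = 1$ is the unique primitive idempotent, and no non-trivial primitive idempotent exists.

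Next I would handle the converse. If $G$ is not solvable, then $O^s(G) \neq 1$ provides a non-trivial perfect subgroup of $G$. Together with the trivial subgroup, this produces at least two distinct $G$-classes $[1,1]_G$ and $[O^s(G),1]_G$ in the indexing set, and therefore the corresponding idempotents $e^s_{1,1}$ and $e^s_{O^s(G),1}$ are two distinct, orthogonal, non-zero primitive idempotents. Since their sum is part of a decomposition of $1$ into orthogonal non-zero idempotents, neither can equal $1$, so each is a non-trivial primitive idempotent of $\text{Д}_\omega^A(G)$.

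The argument is essentially bookkeeping on top of the previous proposition; the only point that requires a bit of care is checking that when $G$ is not solvable, the pair $(O^s(G),1)$ really does give a class distinct from $(1,1)$ in $[\mathcal{M}^A(G)]$, which is clear since $O^s(G) \neq 1$, and that for solvable $G$ no choice of $\phi$ can produce extra classes because every perfect subgroup is trivial. I do not foresee a substantive obstacle here beyond recalling the standard fact that solvability is inherited by subgroups, ensuring $O^s(K) = 1$ for all $K \leq G$ when $G$ is solvable.
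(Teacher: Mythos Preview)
Your proposal is correct and follows exactly the approach the paper intends: the corollary is stated without proof, and the implicit argument is precisely the one you spell out, namely that the previous proposition identifies the primitive idempotents of $\text{Д}_\omega^A(G)$ with the $e^s_{H,\phi}$ indexed by $G$-classes of pairs $(H,\phi)$ with $H$ perfect, so the count reduces to counting perfect subgroups. One small remark you could add for completeness: since a perfect group $H$ satisfies $H=[H,H]$, every homomorphism $H\to A$ is trivial, so the index set is in fact just the set of $G$-conjugacy classes of perfect subgroups of $G$; this makes the ``no choice of $\phi$ can produce extra classes'' observation automatic in both directions.
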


\section{The Functor $\text{Д}^A$. } \label{ fubtor seccion}
The ring $\text{Д}^A(G)$ has the structure of an $A$-fibered biset functor, which is described explicitly in the following definition.
\begin{defi}
We define  the global $A$-fibered  representation ring functor  
\begin{align*}
\text{Д}^A :R\mathcal{C}^A &\longrightarrow R\text{-Mod}\\
G&\longmapsto \text{Д}^A(G)
\end{align*}
Let   $G$,   $H$ be    objects of $ R\mathcal{C}^A$, and  let   $U$ be  an  $A$-fibered $(G,H)$-biset, i.e., an element of  $Hom_{R\mathcal{c}} (H, G)$.   We define the map 
\begin{align*}
\text{Д}^A(U): \text{Д}^A(H) &\longrightarrow \text{Д} ^A(G)\\
[X,V]_H&\longmapsto [U\otimes_{AH} X, \mathbb{C}U \otimes_{ \mathbb{C} AH } V]_G,
\end{align*}
where  $A$ acts on  $V$  via  the trivial action.
\end{defi} 
The functor $\text{Д}^A$ is well-defined, since the tensor product of $A$-fibered bisets respects the isomorphism classes of $A$-fibered bisets, and the direct sum of modules respects the isomorphism classes of modules. If $V=\oplus_{x\in X/A} V_x$, then one has  
\begin{align*}
\mathbb{C}U \otimes_{ \mathbb{C} AH } V  \cong \bigoplus_{u\otimes x\in U\otimes_{AH} X} u\otimes V_x. 
\end{align*}
where  $ u\otimes V_x:=\lbrace u\otimes v_x \mid v_x \in V_x\rbrace$ is a $\mathbb{C}$-vector subspace. 
\begin{nota}
   Let $G$ be a finite group, $K$ a subgroup of $G$, and $N$ a normal subgroup of $G$. 
Moreover, denote the inclusion by $i: K \longrightarrow G$ and the canonical projection by $\pi: G \longrightarrow G/N$. 

We define the elementary $A$-fibered biset operations as follows:
\begin{align*}
{}^A\mathrm{Res}^G_K &:= \left[ \frac{K \times G}{\Delta(K), 1} \right], 
& {}^A\mathrm{Ind}^G_K &:= \left[ \frac{G \times K}{\Delta(K), 1} \right], \\
{}^A\mathrm{Def}^G_{G/N} &:= \left[ \frac{G/N \times G}{\,^{(\pi,1)} \Delta(G)} \right], 
& {}^A\mathrm{Inf}^G_{G/N} &:= \left[ \frac{G \times G/N}{\,^{(1,\pi)} \Delta(G)} \right],
\end{align*}
where
$
{}^{(\pi,1)} \Delta(G) := \{ (gN, g) \mid g \in G \}, \qquad
{}^{(1,\pi)} \Delta(G) := \{ (g, gN) \mid g \in G \}.
$

Finally, if $f : H \longrightarrow G$ is an isomorphism, we set
$
\mathrm{iso}(f) := \left[ \frac{G \times H}{ \{ (f(h), h) \mid h \in H \}, 1} \right].
$
\end{nota}
In the following lemma, we illustrate the importance of these elementary $A$-fibered bisets.
\begin{prop}[Proposition 2.8 \cite{fibered}]
Let $(U, \phi) \in \mathcal{M}^A(G \times H)$. Then, there exist subgroups $P \leq G$, $\hat{K} \unlhd P$, $Q \leq H$, and $\hat{L} \unlhd Q$ such that
$$
[U, \phi]_{G\times H}
\;\; \cong \;\;
{}^A\mathrm{Ind}^G_P \;\; \otimes_A \;\; {}^A\mathrm{Inf}^P_{P/\hat{K}} \;\; \otimes_A \;\; X \;\; \otimes_A \;\; {}^A\mathrm{Def}^Q_{Q/\hat{L}} \;\; \otimes_A \;\; {}^A\mathrm{Res}^Q_H,
$$
where $X$ is a suitable $A$-fibered $(P/\hat{K}, Q/\hat{L})$-biset.
\end{prop}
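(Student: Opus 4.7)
The statement is the fibered analogue of Bouc's butterfly factorization for transitive bisets, and I would prove it by imitating that argument while carefully tracking the linear character $\phi$. Given $(U,\phi) \in \mathcal{M}^A(G \times H)$, the four subgroups $P, \hat K, Q, \hat L$ are all to be extracted from $U$ itself and from the kernel of $\phi$.

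First I would set $P := p_1(U) \leq G$ and $Q := p_2(U) \leq H$, where $p_1, p_2$ are the coordinate projections, and put $k_1 := \{g \in P : (g,1) \in U\}$ and $k_2 := \{h \in Q : (1,h) \in U\}$. Standard Goursat-style arguments give $k_1 \unlhd P$, $k_2 \unlhd Q$, and a canonical isomorphism $P/k_1 \to Q/k_2$ induced by $U$. To incorporate the character, I would take $\hat K := \{g \in k_1 : \phi(g,1) = 1\}$ and $\hat L := \{h \in k_2 : \phi(1,h) = 1\}$, the rationale being that the four outer operations ${}^A\mathrm{Ind}^G_P$, ${}^A\mathrm{Inf}^P_{P/\hat K}$, ${}^A\mathrm{Def}^Q_{Q/\hat L}$, ${}^A\mathrm{Res}^Q_H$ all carry the trivial character, so $\phi$ must vanish on $\hat K \times 1$ and $1 \times \hat L$. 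I would then verify $\hat K \unlhd P$ and $\hat L \unlhd Q$ using that $U$ normalizes $k_1 \times 1$ and $1 \times k_2$ together with the homomorphism property of $\phi$.

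Next I would construct the middle piece $X$. Since $\phi$ vanishes on $\hat K \times \hat L \leq U$, it descends to a homomorphism $\bar\phi : U/(\hat K \times \hat L) \to A$, and $U/(\hat K \times \hat L)$ sits naturally as a subgroup of $P/\hat K \times Q/\hat L$. Setting $X := \bigl[U/(\hat K \times \hat L), \bar\phi\bigr]_{P/\hat K \times Q/\hat L}$ yields a well-defined $A$-fibered $(P/\hat K, Q/\hat L)$-biset, which is the candidate for the middle factor.

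It remains to verify that the displayed tensor composition is isomorphic to $[U,\phi]_{G \times H}$. I would compute the composite using associativity of $\otimes_A$ and the explicit tensor-product formula for fibered bisets from Section 2.1 and Corollary 2.5 of \cite{fibered}. On the underlying bisets the computation reduces to the classical Bouc decomposition, so the new content lies in the character bookkeeping. This is where I expect the main obstacle: one must show that the product character produced by the sequence of five tensor compositions, tracked through the inflations and deflations, agrees with $\phi$ on the subgroup parametrizing the composite orbit, and in particular that no spurious contributions arise outside the subgroups $\hat K \times 1$ and $1 \times \hat L$ which were chosen precisely so that $\phi$ vanishes on them.
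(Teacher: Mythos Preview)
The paper does not prove this proposition at all: it is simply quoted as Proposition~2.8 of \cite{fibered} and no argument is supplied. So there is nothing in the present paper to compare your proposal against.

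That said, your outline is essentially the standard proof of this result as it appears in \cite{fibered}: extract the Goursat data $P,Q,k_1,k_2$ from $U$, refine $k_1,k_2$ to $\hat K,\hat L$ by intersecting with the kernel of $\phi$, and push the residual character onto the middle factor $X$. Your normality argument for $\hat K \unlhd P$ is correct (conjugation by $(g,h)\in U$ preserves $k_1\times 1$ and $\phi$ is a homomorphism into the abelian group $A$), and the descent of $\phi$ to $U/(\hat K\times\hat L)$ is exactly the right way to build $X$. The one point you flag as the main obstacle---the character bookkeeping in the fivefold tensor product---is indeed the only nontrivial step, and in \cite{fibered} it is handled via their explicit composition formula for fibered bisets (their Corollary~2.5 and the surrounding discussion), just as you anticipate. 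Your sketch is sound and matches the source argument.
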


A natural question is how this functor behaves with elementary operations,
\begin{prop}
Let $G$ be a finite group. Then the action of the elementary $A$-fibered biset operations on elements of $\text{Д}^A$ is given as follows:

\begin{itemize}
    \item Let $H \leq G$ and let $[K,\phi,S]_G \in \text{Д}^A(G)$. Then
    $$
        \text{Д}^A({}^A\mathrm{Res}^G_H)([K,\phi,S]_G) = 
        \sum_{x \in [H \backslash G / K]} 
        [H \cap {}^x K, {}^x\phi, \mathrm{Res}^{^x K}_{H \cap {}^x K} ({}^x S)]_H.
    $$
    
    \item Let $H \leq G$ and let $[K,\phi,S]_H \in \text{Д}^A(H)$. Then
    $$
        \text{Д}^A({}^A\mathrm{Ind}^G_H)([K,\phi,S]_H) = [K,\phi,S]_G.
    $$
    
    \item Let $N \trianglelefteq G$ and let $[K/N, \phi, S]_{G/N} \in \text{Д}^A(G/N)$. Then
    $$
        \text{Д}^A({}^A\mathrm{Inf}^G_{G/N})([K/N, \phi, S]_{G/N}) = [K, \phi, \mathrm{Inf}^K_{K/N}(S)]_G.
    $$
    
    \item Let $N \trianglelefteq G$ and let $[K,\phi,S]_G \in \text{Д}^A(G)$. Then
  
{\scriptsize
\[
\text{Д}^A({}^A\mathrm{Def}^G_{G/N})([K,\phi,S]_G) = 
\begin{cases}
    [KN/N, \overline{\phi}, 
    \mathrm{Iso}^{KN/N}_{K/K \cap N} \, 
    \mathrm{Def}^K_{K/K \cap N} S]_{G/N} 
    & \text{if } \phi = 1 \text{ on } K \cap N, \\[6pt]
    0 & \text{otherwise}.
\end{cases}
\]
}

\end{itemize}

\end{prop}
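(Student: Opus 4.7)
The plan is to verify each of the four formulas by unpacking the definition
$\text{Д}^A(U)([X,V]_H)=[U\otimes_{AH} X,\,\mathbb{C}U\otimes_{\mathbb{C}AH} V]_G$
and treating it as two parallel computations: one for the underlying $A$-fibered biset $U\otimes_{AH}[K,\phi]_H$, and one for the corresponding $\mathbb{C}G$-module $\mathbb{C}U\otimes_{\mathbb{C}AH} i^H_K(S)$. In each case the first computation reduces to a tensor product of fibered bisets already handled in Section~2 of \cite{fibered} (in particular by the fibered Mackey formula, Corollary~2.5), and the second reduces to a standard identity for the classical induction, restriction, inflation, or deflation of $\mathbb{C}G$-modules. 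The main task is to check that the two decompositions agree summand by summand, respecting the grading convention from the definition of $\aleph_G$.

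For restriction, the fibered Mackey formula yields ${}^A\mathrm{Res}^G_H\otimes_{AG}[K,\phi]_G=\bigsqcup_{x\in[H\backslash G/K]}[H\cap{}^xK,\,{}^x\phi]_H$, and the classical Mackey decomposition of $\mathrm{Res}^G_H\,i^G_K(S)$ breaks up along the same double cosets to give $i^H_{H\cap{}^xK}\mathrm{Res}^{{}^xK}_{H\cap{}^xK}({}^xS)$ on each summand, so the two indexings match. Induction is immediate from the equality ${}^A\mathrm{Ind}^G_H\otimes_{AH}[K,\phi]_H=[K,\phi]_G$ together with the associativity $\mathbb{C}G\otimes_{\mathbb{C}H}(\mathbb{C}H\otimes_{\mathbb{C}K} S)\cong\mathbb{C}G\otimes_{\mathbb{C}K} S$. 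Inflation is analogous: one computes ${}^A\mathrm{Inf}^G_{G/N}\otimes_{A(G/N)}[K/N,\phi]_{G/N}=[K,\phi\circ\pi]_G$, and on the module side the inflation along $\pi\colon K\to K/N$ produces $i^G_K\mathrm{Inf}^K_{K/N}(S)$, as claimed.

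The deflation step is where the real work concentrates, and it is the main obstacle. Two things must be checked. First, the fibered biset tensor product ${}^A\mathrm{Def}^G_{G/N}\otimes_{AG}[K,\phi]_G$ is nonzero precisely when $\phi|_{K\cap N}=1$: this follows from the transformation rule $gx=\phi_x(g)x$ on $A$-orbits, since elements of $K\cap N$ must fix the image of the stabilized $G/N$-orbit and therefore act trivially through $\phi$, on pain of collapsing the $A$-freeness of the would-be fibered $G/N$-set. Second, assuming $\phi|_{K\cap N}=1$, the resulting transitive fibered $G/N$-set is $[KN/N,\overline{\phi}]_{G/N}$, and on the module side the canonical isomorphism $KN/N\cong K/(K\cap N)$ identifies the induced quotient module with $\mathrm{Iso}^{KN/N}_{K/K\cap N}\,\mathrm{Def}^K_{K/K\cap N}(S)$, matching the claimed formula. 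The hard point is carefully tracking how the $A$-fiber structure and the $X/A$-grading interact under the passage to the quotient $G/N$; once this compatibility is established, the remaining identifications are formal consequences of the associativity of the tensor product and of the classical and fibered Mackey decompositions.
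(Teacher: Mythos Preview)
Your proposal is correct and matches the paper's approach: the paper's own proof is a single sentence stating that the formulas follow directly from the definitions (with a reference to \cite{caderonTesis} for a more general treatment), and your outline is precisely a careful unpacking of those definitions case by case. If anything, you have supplied substantially more detail than the paper does, particularly in identifying the deflation case as the one requiring genuine care with the $A$-freeness condition and the compatibility of the grading under the quotient.
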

\begin{proof}
    The statement follows directly from the definitions. More generally, this property was established on page~48 of \cite{caderonTesis}.
\end{proof}

\section{Applications} \label{aplications}

\begin{thm}
The order of $G$ is even if and only if there exists a ring homomorphism 
$ 
f : D(G) \longrightarrow \mathbb{C}
$ 
whose image lies in the real numbers, and such that $f$ is not a fibered mark (that is, $f \neq S_{H,\phi,1}$ for all subgroups $H$). 
\end{thm}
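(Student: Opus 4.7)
The plan is to exploit the classification from Section \ref{marks}: every ring homomorphism $\text{Д}^A(G)\to\mathbb{C}$ is of the form $S_{L,\psi,a}$ for a unique class $[L,\psi,a]\in[\mathcal{T}(G)]$, with the identification $S_{L,\psi,a}=S_{H,\phi,1}$ occurring exactly when $a$ is $G$-conjugate to $1$, i.e., $a=1$. The theorem therefore reduces to showing that there exists $(L,\psi,a)\in[\mathcal{T}(G)]$ with $a\neq 1$ such that $S_{L,\psi,a}$ takes real values on every basis element, if and only if $|G|$ is even. Throughout the argument I will use the general formula
\[
S_{L,\psi,a}([K,\phi,S]_G)=\sum_{\substack{x\in[G/K]\\(L,\psi)\,\leq\,{}^x(K,\phi)}}\chi_S(x^{-1}ax),
\]
which reduces the reality question to the reality of character values at conjugates of $a$.

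For the forward direction, assume $|G|$ is even. By Cauchy's theorem there exists an involution $a\in G$. I set $L:=\langle a\rangle$, let $\psi:=1$ be the trivial homomorphism, and define $f:=S_{L,\psi,a}$. Every element $x^{-1}ax$ appearing in the formula above is again an involution, and for any character $\chi$ and any involution $b$ one has $\chi(b)=\overline{\chi(b^{-1})}=\overline{\chi(b)}\in\mathbb{R}$. Hence $f$ has real image. Since $a\neq 1$, the classification ensures that $f\neq S_{H,\phi,1}$ for every $(H,\phi)$, so $f$ is not a fibered mark.

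For the converse, assume $|G|$ is odd and let $f=S_{L,\psi,a}$ be a real-valued ring homomorphism with $a\neq 1$. Testing $f$ on the basis elements $[L,\psi,\chi]_G$ with $\chi\in\mathrm{Irr}(L)$, the general formula collapses to
\[
f([L,\psi,\chi]_G)=\sum_{x\in[N_G(L,\psi)/L]}\chi(x^{-1}ax).
\]
Grouping this sum by the $L$-conjugacy class of $x^{-1}ax$, the reality of the sum for every $\chi\in\mathrm{Irr}(L)$, combined with $\overline{\chi(g)}=\chi(g^{-1})$ and the linear independence of the irreducible characters on the class functions of $L$, forces the multiset of classes $\{[x^{-1}ax]_L\}_x$ to be invariant under inversion. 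Taking $x=1$ shows that the class of $a^{-1}$ also appears, so $a$ is $G$-conjugate to $a^{-1}$. However, in an odd-order group the only real element is the identity: from $g^{-1}ag=a^{-1}$ with $a\neq 1$ one obtains $g^2\in C_G(a)$ while $g\notin C_G(a)$, so $[\langle a,g\rangle:C_{\langle a,g\rangle}(a)]=2$, contradicting $|\langle a,g\rangle|$ odd. Hence $a=1$, contradicting $a\neq 1$. The main obstacle is the extraction of inversion-symmetry from the reality of every $\chi$-sum; this is handled cleanly by orthogonality once the basis elements $[L,\psi,\chi]_G$ are chosen, after which the rest of the argument is elementary.
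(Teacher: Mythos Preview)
Your proof is correct, but the converse direction is more elaborate than necessary compared with the paper's argument. In the forward direction the paper simply takes the triple $(G,1,b)$ with $b$ an involution, so that $S_{G,1,b}$ vanishes on every basis element $[K,\phi,S]_G$ with $(K,\phi)\neq(G,1)$ and equals $\chi_S(b)\in\mathbb{R}$ otherwise; your choice $(\langle a\rangle,1,a)$ works equally well and for the same reason.

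The real difference lies in the converse. The paper observes in one line that complex conjugation of the species formula gives $\overline{S_{L,\psi,a}}=S_{L,\psi,a^{-1}}$ as ring homomorphisms on all of $\text{Д}^A(G)$, so reality of $f$ immediately yields $S_{L,\psi,a}=S_{L,\psi,a^{-1}}$, and the classification lemma from Section~\ref{marks} then forces $(L,\psi,a)=_G(L,\psi,a^{-1})$, hence $a$ is conjugate to $a^{-1}$. You instead extract this conclusion locally by evaluating on the basis elements $[L,\psi,\chi]_G$ and invoking linear independence of irreducible characters to obtain the inversion symmetry of the multiset of $L$-classes $\{[x^{-1}ax]_L\}$. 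This is correct and is essentially a hands-on reproof, for this particular pair of species, of the uniqueness lemma you already have available; the paper's route is shorter precisely because it uses that lemma directly. After reaching ``$a$ is real in $G$'' both arguments finish the same way, and your index-$2$ centralizer observation is equivalent to the paper's parity computation with powers of $g$.
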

\begin{proof}
Assume that there exists an element $b \in G$ of order $2$. Then the ring homomorphism $S_{G,1, b}$ is not a fibered mark, and for all $\mathbb{C}G$-modules $V$ we have $S_{G,\phi, b}([G, \phi,  V]) = \chi_V(b)$ is a real number.\\
Now assume that there exists a ring homomorphism $S_{H,\psi, b}$ which is not a  fibered mark (that is, $b \neq 1$) and real-valued, so $S_{H,\phi, ,b} = S_{H,\phi,b^{-1}}$. This means that $b$ is conjugate to $b^{-1}$, say there exists $g \in G$ such that $g b g^{-1} = b^{-1}$. Note that $g^2 b g^{-2} = g b^{-1} g^{-1} = b$. 
If the order of $g$ is even, then so is the order of $G$. If the order of $g$ is an odd number $r$, then  $b = g^r b g^{-r} = b^{-1}$,
and since $b \neq 1$, then the order of $b$ is $2$.
\end{proof}
\begin{thm}
 Let $G$ be an arbitrary finite group. If there exists a non-trivial primitive  idempotent in $ \text{Д}^A(G)$ then there exists a ring homomorphism from $ \text{Д}^A(G)$ to $\mathbb{R}$ which is  not a fibered  mark (that is, it is not of the form $S_{H,\phi,1}$ )
\end{thm}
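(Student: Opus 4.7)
The plan is to chain together three results already established in the paper, with the Feit--Thompson theorem serving as the essential bridge.

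First, I will show that the hypothesis forces $G$ to be non-solvable. Assume $\text{Д}^A(G)$ contains a non-trivial primitive idempotent $e$. Under the canonical inclusion $\text{Д}^A(G) \hookrightarrow \text{Д}_\omega^A(G)$, the element $e$ remains a non-trivial idempotent. Decomposing $e$ as a sum of primitive idempotents in $\text{Д}_\omega^A(G)$ yields at least one non-trivial primitive idempotent in that larger ring. By the corollary closing Section~\ref{conducotors}, this implies that $G$ is non-solvable.

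Second, I invoke the Feit--Thompson theorem: every finite group of odd order is solvable, so any non-solvable finite group has even order. Hence $|G|$ is even.

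Third, I apply the preceding theorem of Section~\ref{aplications}, which furnishes exactly the required conclusion from the evenness of $|G|$: a ring homomorphism $\text{Д}^A(G) \to \mathbb{C}$ whose image lies in $\mathbb{R}$ and which is not of the form $S_{H,\phi,1}$. This is the desired homomorphism.

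The main ``obstacle'' is entirely absorbed into the Feit--Thompson theorem; the remainder is bookkeeping. The conceptual point, and the reason this appears in the applications section, is that both the idempotent-theoretic condition (existence of a non-trivial primitive idempotent in $\text{Д}^A(G)$) and the spectrum-theoretic condition (existence of a real-valued non-mark homomorphism) are each equivalent to a group-theoretic condition (non-solvability, respectively even order), and Feit--Thompson identifies these two group-theoretic conditions for finite groups.
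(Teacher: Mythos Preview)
Your proof is correct and follows essentially the same route as the paper: both chain together the corollary on primitive idempotents and solvability, the Feit--Thompson theorem, and the preceding theorem on even order. The only cosmetic difference is that the paper argues by contrapositive (no real-valued non-mark $\Rightarrow$ odd order $\Rightarrow$ solvable $\Rightarrow$ no non-trivial primitive idempotent), whereas you argue directly; your extra care in passing from $\text{Д}^A(G)$ to $\text{Д}_\omega^A(G)$ before invoking the corollary is a detail the paper leaves implicit.
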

\begin{proof}    
Assume there is no ring homomorphism from $ \text{Д}^A(G)$ to $\mathbb{R}$ other than the marks. 
This implies that the order of $G$ is odd. By the Feit–Thompson Theorem (see \cite{feit}), the group $G$ is solvable, 
so the only primitive idempotent in $ \text{Д}^A(G)$ is $1$
\end{proof}

\section*{Declarations}
Conflict of interest statement We have no potential conflict of interest.

\bibliographystyle{plain}
\bibliography{biblio1}

\end{document}